\newcommand{\bigO}{\mathcal{O}}
\newcommand{\diff}{\mathrm{d}}
\newtheorem{theorem}{Theorem}[section]
\newtheorem{lemma}[theorem]{Lemma}
\newtheorem{remark}[theorem]{Remark}
\newtheorem{corollary}[theorem]{Corollary}
\newtheorem*{main-theorem}{Main Theorem}
\newtheorem*{remark*}{Remark}
\numberwithin{equation}{section}
\def \div {\mathrm{div}}
\let\f=\frac
\def \p {\partial}
\def \pt {\partial_t}
\def \na {\nabla}
\def\vr {\varrho}
\def \na {\nabla}
\def \lk {\Lambda^k}
\def \cE {\mathcal{E}}
\def \ls {\Lambda^s}
\begin{document}
	
	\allowdisplaybreaks
	
	\title[]{Large time existence of Euler-Korteweg equations and two-fluid Euler-Maxwell equations with vorticity}

\author{Changzhen Sun}

%\author{Yuexun Wang}

\address{Universit\' e Paris-Saclay, CNRS, Laboratoire de Math\'  ematiques d'Orsay, 91405 Orsay, France.}
\email{changzhen.sun@u-psud.fr}

	\begin{abstract}
The aim of this paper is to study the influence of the vorticity on the existence time in fluid  systems for which global
smoothness and decay is known in the case of small irrotational data. We focus on
two examples: the Euler-Korteweg system and the two-fluid Euler Maxwell system.
We prove that the lower bound of the lifespan of these systems is no less than
the inverse of the $H^s$ $(s>5/2)$ norm of the rotational part of the initial velocity.
Our approach is based on energy estimates and the fast time decay results of global solutions to these systems with irrotational initial data.
	
%	In this paper, we are concerned with the %lifespan estimate and blow up results
%well-posedness
%for 3d compressible Euler-Korteweg system.
%The aim of this paper is two-folds.
 %which is similar to the results stated in \cite{audiard2019time}.

	\end{abstract}

	\maketitle

\section{Introduction}

In this paper, we are concerned with the well-posedness of 3-d compressible Euler-Korteweg system which reads

\begin{eqnarray}\label{EK}
\left\{
\begin{array}{l}
\displaystyle
\partial_t \rho + \div( \rho u)=0,\\
\displaystyle \rho \partial_t u+ \rho u\cdot\nabla u +
\nabla P(\rho)-\rho \nabla \bigg(K(\rho)\Delta \rho +\f{1}{2}K'(\rho)|\nabla\rho|^2\bigg)=0, \\
\displaystyle u|_{t=0} =u_0 ,\rho|_{t=0}=\rho_0.
\end{array}
\right.
\end{eqnarray}
where $\rho,u$
are the density and velocity of the fluid,
$P(\rho)$ is the pressure and is assumed to obey the so-called $\gamma$-law, that is $P(\rho)=A\rho^{\gamma},A>0$.
$K(\rho)$ is the Korteweg tensor, which takes the capillary effects into account.

%As we have seen form (\ref{EK}),
As the modifications of compressible Euler equation through the adjunction the Korteweg stress tensor, Euler-Korteweg system is
a mathematical model arising from  hydrodynamics and quantum hydrodynamics. In fact, in hydrodynamics, it can be used to describe at interface the flow of capillary flows, for example, a liquid-vapor mixture. Moreover, when $K(\rho)=\f{c}{\rho}, P(\rho)=\f{1}{2}\rho^2$, the Euler-Korteweg system can be transformed formally by the so-called Madelung transform\cite{MR2979973} $\psi=\sqrt{\rho}e^{i\phi}, u=\nabla\phi$ to the Gross-Pitaevskii
equation which is a very important equation in geometric optics and quantum mechanics.

In the last two decades, some interesting results concerning the well-posedness of compressible Euler-Korteweg have been obtained. For the  cauchy problem of
Euler-Korteweg in one dimension, local well-posedness for smooth perturbations of travelling profiles was established by Benzoni-Gavage, Danchin and Descombes
\cite{MR2226932} by using Lagrangian coordinate.
%re-formulating the equation to the quasilinear Schrodinger equation.
Later on, in \cite{MR2354691}, the same authors extend the
result to multi-dimension %Euler-Korteweg
by reformulating the system into a nonlinear degenerate Schr\"odinger equation incorporated with the 'gauge' technique (by introducing some 'gauge' function to recover some cancellations in order to avoid losing derivatives). More precisely, for
$d(d\geq 1)$ dimensional Euler-Korteweg system, they proved the local existence under $H^{s+1}\times H^s$ $(s>d/2+1)$ perturbations
 to the stationary solution $(\bar{\rho},0)(\bar{\rho}>0)$.
 Recently, Bezoni-Gavage and Chiron \cite{MR3763346} establish some uniform estimate in several different wave regimes and justify the asymptotic limit. Concerning the global well-posedness, it is shown by Audiard and Haspot \cite{MR3613503} that
3-D Euler-Korteweg system admits global small \textit{irrotational} solutions, by enforcing the so-called 'space-time resonance' method which turns out to be the efficient tools to get the global existence of some models that admits dispersive properties with critical
nonlinearities. One can refer to
\cite{germain2010space,MR2993751,MR2360438}.
Regarding to the large time existence with nontrivial vorticity, %of 3-d Euler-Korteweg,
Audiard proved in \cite{audiard2019time} that the lifespan $T_{\star}$ of 3-D Euler-Korteweg system is no less than the inverse of the size of the rotational part in some suitable weighted space. His strategy is to study rigorously the highly coupled system composed by the equations of the 'rotational' and 'irrotational part' of the velocity. Nevertheless,
in this process, one needs to deal with the complicated interactions between 'rotational' parts and 'irrotational' part.
The first aim of this paper is to give an alternative approach for the lifespan estimate of Euler-Korteweg system with vorticity, where merely energy estimates are used. We remark also that our proof does not need the localization assumption on
the 'rotational' part  of initial velocity.

We denote $\mathcal{P}$ the Leray projector that maps a vector in $L^2(\mathbb{R}^3)^3$ to its divergence free part and  $\mathcal{P}^{\perp}=Id-\mathcal{P}$ the 'curl-free' projector.

%To show our ideas clearly, we will assume in this paper that
%$K(\rho)=1$.
The following is the main results:

\begin{theorem}
\label{thmek0}
Suppose that the Korteweg tensor $K(\rho)$ is smooth and satisfies:
$K(\rho)\geq K_0>0$ for $\bar{\rho}/2\leq \rho\leq 3\bar{\rho}/2$.
 There exists three constants $\delta_1, \epsilon_1>0$ small, and $N$ large. If the initial datum $(\rho_0-\bar{\rho},u_0)$
  satisfies the following:
  \begin{equation}%\label{sec1_eq:1}
	\begin{aligned}
	\|\mathcal{P} u_0\|_{H^s}<\epsilon_{1},\nonumber
	\end{aligned}
	\end{equation}
   \begin{equation}%\label{sec1_eq:2}
	\begin{aligned}
 & \|\mathcal{P}^{\perp}u_0\|_{H^N}+\|\rho_0-\bar{\rho}\|_{H^{N+1}}+\|x(\rho_0-\bar{\rho},\mathcal{P}^{\perp} u_0)\|_{L^2}\\
&\qquad\qquad +\|u_0\|_{W^{6,1}}+\|\rho_0-\bar{\rho}\|_{W^{7,1}}\leq \delta_1, \nonumber
  \end{aligned}
	\end{equation}
where $5/2<s\leq 3.$ Then there exists $T_{\epsilon_1}\gtrsim \epsilon_1^{-1}$ such that the Euler-Korteweg equation \eqref{EK}
 has a unique solution and  $$(\rho-\bar{\rho},u)\in C([0,T_{\epsilon_1}],H^{s+1}(\mathbb{R}^3)\times H^s(\mathbb{R}^3)).$$

In addition, if
 $\mathcal{P}u_0\in H^{N}$, then the solution  $$(\rho-\bar{\rho},u) \in C([0,T_{\epsilon_1}],H^{N+1}(\mathbb{R}^3)\times H^{N}(\mathbb{R}^3)).$$
 with exponential growth: for any $0\leq t\leq T_{\epsilon_1}$,
 $$\|(\rho-\bar{\rho},u)(t)\|_{H^{N+1}\times H^N}\lesssim e^{ct}\|(\rho-\bar{\rho},u)(0)\|_{H^{N+1}\times H^N}.$$
\end{theorem}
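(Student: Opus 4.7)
The plan is to construct a global \emph{irrotational} reference solution from the smoother, localized part of the initial data, and then treat the true solution as a perturbation of this reference whose rotational component has size $\epsilon_1$. The sharp dispersive decay of the reference will render the transport-type coefficients in the perturbation equation integrable in time, so that the lifespan question reduces to a standard cubic bootstrap in which $\epsilon_1$ is the only nontrivial small parameter.

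\textbf{Irrotational background.} First I would invoke the Audiard--Haspot global existence theorem on the curl-free initial data $(\rho_0,\mathcal{P}^{\perp}u_0)$, which by hypothesis satisfies the smallness, regularity and weighted-$L^2$ conditions encoded by $\delta_1$ (and the $W^{k,1}$ bounds that feed the space-time resonance argument). Because the convective and Korteweg forces are both gradients, the vorticity equation for Euler--Korteweg propagates irrotationality, so the resulting global solution $(\rho_I,u_I)$ stays curl-free and inherits the sharp dispersive decay
\begin{equation*}
\|(\nabla u_I,\nabla \rho_I,\Delta \rho_I)(t)\|_{W^{k,\infty}}\lesssim \delta_1(1+t)^{-3/2},\qquad k\le N-c,
\end{equation*}
together with uniform-in-time $H^{N+1}\times H^N$ bounds.

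\textbf{Perturbation equation, energy, and bootstrap.} Setting $\sigma=\rho-\rho_I$ and $v=u-u_I$, the pair $(\sigma,v)$ satisfies a variable-coefficient Euler--Korteweg system linearized around $(\rho_I,u_I)$, with initial datum $(0,\mathcal{P}u_0)$ of size $\epsilon_1$ in $H^{s+1}\times H^s$. Exploiting the mixed hyperbolic--dispersive symmetric structure of Euler--Korteweg (the capillary tensor furnishes one extra derivative on density), I would construct an energy $E(t)\simeq\|\sigma\|_{H^{s+1}}^2+\|v\|_{H^s}^2$ satisfying
\begin{equation*}
\tfrac{d}{dt}E\lesssim \bigl(\|\nabla u_I\|_{L^\infty}+\|(\nabla \rho_I,\Delta \rho_I)\|_{W^{s,\infty}}\bigr)E+E^{3/2}.
\end{equation*}
The bracketed coefficient is integrable in $t$ by the previous step with integral bounded by $C\delta_1$, so Gr\"onwall turns it into a harmless multiplicative factor $e^{C\delta_1}$; the cubic term is what ultimately limits the lifespan. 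Under the ansatz $E(t)^{1/2}\le 2\epsilon_1 e^{C\delta_1}$ on $[0,T]$, the inequality reduces to $E(t)\lesssim \epsilon_1^2+\epsilon_1^3 t$, which strictly improves the ansatz as long as $t\lesssim \epsilon_1^{-1}$. Combined with the Benzoni-Gavage--Danchin--Descombes local well-posedness and a standard continuation argument, this yields the lifespan bound $T_{\epsilon_1}\gtrsim \epsilon_1^{-1}$.

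\textbf{Main obstacle and the higher-regularity addendum.} The principal analytic hurdle I expect is the quasilinear third-order Korteweg term $\nabla(K(\rho)\Delta \rho)$, for which naive $\Lambda^s$ differentiation loses a derivative on $\rho$. This is circumvented by the symmetrization familiar from the Euler--Korteweg literature: paired against $\Lambda^s(\rho\,\mathrm{div}\,v)$ from the continuity equation, the highest-order contribution $\nabla(K(\rho)\Delta \Lambda^s\sigma)$ becomes antisymmetric after one integration by parts, leaving commutator remainders that are controlled, via Kato--Ponce-type estimates and the Sobolev threshold $s>5/2$, by low norms of $\rho$ and $K(\rho)$ only. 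For the addendum, once $\|(\sigma,v)\|_{H^{s+1}\times H^s}\lesssim \epsilon_1$ has been established on $[0,T_{\epsilon_1}]$, the same symmetrized energy identity applied at order $N$ reads $\tfrac{d}{dt}E_N\lesssim (\|\nabla u\|_{L^\infty}+\|\nabla \rho\|_{W^{1,\infty}})E_N$, whose right-hand side is now bounded by the small low-norm quantities; Gr\"onwall then produces the exponential growth stated in the theorem.
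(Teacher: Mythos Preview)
Your overall architecture is exactly the paper's: invoke Audiard--Haspot on $(\rho_0,\mathcal{P}^{\perp}u_0)$ to produce a global irrotational background with integrable-in-time $W^{k,\infty}$ decay, write the true solution as background plus a perturbation $(\sigma,v)$ with initial data $(0,\mathcal{P}u_0)$, derive an energy inequality $\dot E\lesssim(\text{decaying})E+E^{3/2}$, and close by Gr\"onwall plus bootstrap. The higher-regularity addendum is also handled the same way.

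The gap is in the sentence ``the highest-order contribution becomes antisymmetric after one integration by parts, leaving commutator remainders that are controlled.'' The standard Euler--Korteweg gauge $\phi_s(\rho)=(\rho K(\rho))^{s/2}$ achieves this cancellation by exploiting $\partial_j(\mathcal{P}^{\perp}v)_l=\partial_l(\mathcal{P}^{\perp}v)_j$, i.e.\ it only neutralizes the interaction of the Korteweg term with the \emph{curl-free} part of the velocity. Here your perturbation velocity has initial data $\mathcal{P}u_0$ and is therefore genuinely rotational; after the usual gauge and all integrations by parts one is left with a residual top-order term of the type
\[
\sum_{j}\int \rho K\,\phi_s'\,\Lambda^{s}\partial_j\sigma\;\nabla\rho\cdot\partial_j\Lambda^{s}\mathcal{P}v\,\mathrm{d}x,
\]
which carries $s+1$ derivatives on $\mathcal{P}v$ and is \emph{not} a Kato--Ponce commutator. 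The paper closes this by introducing a second gauge $\varphi_s(\rho)$, determined by $(\varphi_s^2)'=-2\rho\,\phi_s'$, and augmenting the energy with $\tfrac12\|\mathcal{P}(\varphi_s\Lambda^{s}v)\|_{L^2}^2$; the time derivative of this additional piece produces exactly the opposite of the residual term above, and only then does your inequality $\dot E\lesssim(\text{decaying})E+E^{3/2}$ hold. Without this second gauge the energy estimate does not close, so you should flag it explicitly rather than absorbing it into ``familiar symmetrization.''
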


%\begin{remark}
%\end{remark}

To prove Theorem \ref{EK},
a natural attempt is, as in \cite{audiard2019time}, to study the highly coupled system
 by considering the 'rotational' parts $\mathcal{P} u$  and 'irrotational' parts $\mathcal{P}^{\perp} u=u-\mathcal{P} u$.
 However, when one tries to extend the lifespan of $\mathcal{P} u$  to $1/\|\mathcal{P}^{\perp} u_0\|_{H^s}$ $(s>5/2),$ it is necessary to prove  that the %$L^{\infty}$
irrotational part of velocity $\mathcal{P}^{\perp} u$ enjoys the integrable time decay. However, when using 'space-time resonance' method to  perform decay estimate for $\mathcal{P} ^{\perp}u$,  one needs to study the 'dispersive$\times$ vorticity' interactions which bring a lot of extra work. On the other hand,
since one need that $\mathcal{P} u$ is in some weighted space in order to prove its decay property, this 'dispersive$\times$ vorticity' interactions will forces us to  assume that the $\mathcal{P} ^{\perp}u$ also belongs to some weighted space.   In the following, we propose a shorter approach that do not require the rotational part $\mathcal{P} ^{\perp}u$ lies in any weighted space.
%which only involves  energy estimate
%by using the global existence  of fast decaying solutions to the Euler-Korteweg system with irrotational perturbations\cite{MR3613503}. %which has been estabilished in
%provided the knowledge in advance the integrable time decay properties of global solution  with irrotational initial perturbation.
%decay estimate for $\cP^{\perp} u$
%since it forces us to analyze the new time resonances (see for instance \cite{audiard2019time}.) %where the long time regularity for one-fluid Euler-Maxwell is proved.)
 %A natural attempt is, as in \cite{MR3742601}, to study the highly coupled system by considering the 'rotational' parts ($Y,W$) and 'potential' parts  $(\cP v,\cP u)$. However, when one try to extend the lifespan of $Y,W$ to $1/\|Y^0\|_{H^3}$, it is necessary to prove  that the %$L^{\infty}$ potential part of velocity $(\cP v,\cP u)$ enjoys the integrable time decay. However, when performing decay estimate for $(\cP v,\cP u)$,  one needs to study the 'dispersive$\times$ vorticity' reactions which bring us a lot of extra work, since it forces us to analyze the new time resonances (see for instance \cite{MR3742601}.) %where the long time regularity for one-fluid Euler-Maxwell is proved.)

%Since our strategy is generic,
To explain the main ideas, we will restrict ourselves to the more abstract setting. Consider a system:
\begin{equation}\label{toy}
\left\{
\begin{array}{l}
\displaystyle
      \partial_t U +J L U =U'\cdot\nabla U\\
      \displaystyle  U|_{t=0}=U_0\\
\end{array}
\right.
\end{equation}
where $U(t,x)=(U_1,U'):\mathbb{R}_{+}\times\mathbb{R}^3\rightarrow \mathbb{R}^4$, is a four-elements vector function,  $J$ is a skew symmetric differential matrix, $L$ is self-adjoint and positive in some suitable space in the sense that $(Lu,v)_{L^2}=(u,Lv)_{L^2}, (LU,U)_{L^2}\geq \|U\|_{L^2}^2$.
For example $$J=\left(
  \begin{array}{cc}
    0&\div\\
    \na&0\\
  \end{array}
\right),
\qquad
L=\left(
  \begin{array}{cc}
 Id-\Delta &0\\
   0 &Id_{3\times 3}\\
  \end{array}
\right),$$
is the Euler-Korteweg type equations (the simplified case that the term $\rho\div u$ is dropped in $\eqref{EK}_1$ and $P(\rho)=\frac{1}{2}\rho^2, K(\rho)=1$ in
$\eqref{EK}_2$ is assumed)
while
$$J=\left(
  \begin{array}{cc}
0 &\div\\
   \na & 0\\
  \end{array}
\right),
\qquad
L=\left(
  \begin{array}{cc}
 Id+(-\Delta)^{-1} &0\\
   0 &Id_{3\times 3}\\
  \end{array}
\right),$$
is the one fluid Euler-Poisson type equations. %(which is a simplified model of Euler-Maxwell by supposing $(n,v,B)=0$ in \eqref{EM0}) type equations.

We suppose firstly that for the curl-free ($\mathrm {curl} U'_{0}=0$) smooth initial datum, there exists global solutions in some Sobolev space $H^N$ (where $N$ is large enough) which decays fast enough to 0 as the time goes to infinity. More precisely, we suppose that
$\|U\|_{W^{4,\infty}}\lesssim (1+t)^{-{\alpha}}$ with $\alpha>1$.
Now, we want to analyze the large time existence of system \eqref{toy} with general
(not necessarily curl-free) smooth initial data.
Our strategy is to split the system \eqref{toy}
into two systems, with initial data $(U_1,\mathcal{P}^{\perp}U_{0}')$ and $(0,\mathcal{P}{U_0'})$. %, where $\mathcal{P}=\na(\Delta)^{-1}\div$ is Leray projector which projects the vector in $L^2$ to the divergence free subspace and $\mathbb{P}^{\perp}=Id-\mathcal{P}$.
To be more concrete, we write $U=W+V$, where $W$ solves \eqref{toy} with initial data $(0,\mathcal{P}^{\perp}U_{0}')$, and $V$ satisfies the equation:
\begin{equation}\label{toyper}
\left\{
\begin{array}{l}
\displaystyle
      \partial_t V +J L V =(V'+W')\cdot\nabla V+ V'\cdot \nabla W =: F(V,W)\\
      \displaystyle  W|_{t=0}=(0,\mathcal{P}U'_0)\\
\end{array}
\right.
\end{equation}
To study the long time existence of \eqref{toyper}, it suffices for us to get appropriate a priori energy estimates.
%we want to use merely energy estimate.
Let us define energy functional $$E_s(t)=%\sum_{|\alpha|\leq 3}
\int_{\mathbb{R}^3} %\partial^{\alpha}
\ls V\cdot L\ls V (t) \diff x$$ where $\Lambda=\sqrt{1-\Delta}$ and $s>5/2$.
Taking $\ls$ on system \eqref{toyper}, and testing $L\ls V$, we then get
\begin{equation*}
\partial_t E_s\leq \int_{\mathbb{R}^3} \ls F(V,W)\cdot L \ls V \diff x
\end{equation*}
which yields by commutator estimate, if $L$ is lower order operator (for example Euler-Poisson system), we could get that:
$$\partial_t E_s\lesssim E_s^{\frac{3}{2}}+\|W\|_{W^{s+1,\infty}}E_s\lesssim
E_s^{\f{3}{2}}+(1+t)^{-\alpha}E_s$$
 from which, one deduce by the Gr\"onwall inequality and continuation arguments,
 that, there exists solutions for system \eqref{toyper} in $C([0,T),H^s)$ for $T\gtrsim 1/\|\mathcal{P}U'_0\|_{H^s}$.
However, when $L$ is higher order (for example Euler-Korteweg type), direct energy estimate will inevitably lose derivatives. In this case, the 'gauge' technique used in \cite{MR2354691}
need to be employed. %({\color{red}see details below})
%To summarize, the main strategy is to split the original system into two new systems whose initial velocities are'rotational' and 'irrotational' part of initial velocity  of original system respectively.

We would like to mention that this strategy is inspired by the former work of the author with Rousset \cite{rousset2019stability} where the uniform stability for Navier-Stokes-Poisson system in the inviscid limit is established. It turns out that this approach is flexible for many models that admit global solutions with integrable time decay under the irrotational initial perturbation to equilibria, one could consider Euler-Poisson, Euler-Maxwell (one fluid, two-fluid)...Moreover, the method proposed in this paper will %(significantly)
simplify the proof to large extend when the 'space-time resonance' of phase function is difficult to analyze, since one do not need to
take care of the new resonances arising from the 'dispersive$\times$vorticity' interactions.
We will prove the similar results to Theorem \ref{thmek0} for 'two-fluid' Euler-Maxwell equation \eqref{EM0}
 which is new.
 Note in \cite{MR3742601},
 Ionescu and Lie prove the long time existence for one-fluid Euler-Maxwell equations, although their method is likely to be adapted to prove the
 similar results for 'two-fluid' case,  the proof will be much sophisticated since the 'space-time resonance' is harder to analyze than the 'one-fluid' case.

\textbf{Organisation of the paper}
We will use the strategy stated above to prove Theorem \ref{EK} in Section 2.
We then prove the similar result for 'two-fluid' Euler-Maxwell equations in Section 3. Finally, we recall some useful  lemmas in appendix.

\section{Proof of Theorem \ref{thmek0}}

%To show Theorem \ref{thmek0},we will assume that $P(\rho)=\frac{1}{2}\rho^2$ %and $\bar{\rho}=1$ for the sake of clarity.Nevertheless, there is no specific cancellation arising from this choice. We shall comment in the end of this section to the general pressure case.

As explained in the Introduction, we split the original system \eqref{EK} into two systems by letting
\begin{equation*}
\begin{aligned}
&\rho=(\bar{\rho}+\varrho)+n=:\tilde{\rho}+n,\\
&u=w+v,
\end{aligned}
\end{equation*} 	
such that the unknowns $(\varrho,w)$ satisfy the system
\begin{eqnarray}\label{EK1}
\left\{ \begin{array}{ll}
	\partial_t \tilde{\rho} +\mathrm{div}( \tilde{\rho} w)=0,\\
    \partial_t w+  w\cdot\nabla w +
g(\tilde{\rho})	\nabla\tilde{\rho} %P(\bar{\rho}+\vr)
	=\nabla(K(\tilde{\rho})  \Delta \tilde{\rho}+\frac{1}{2}K'(\tilde{\rho})|\tilde{\rho}|^2), \\
	 w|_{t=0} =\mathcal{P}^{\perp}u_0,\tilde{\rho}|_{t=0}=\rho_0,
\end{array}
\right.
\end{eqnarray}
and the unknowns $(n,v)$ satisfy the system
\begin{eqnarray}\label{EK2}
\left\{ \begin{array}{ll}
	\partial_t n +\mathrm{div}( \rho v+nw)=0,\\
	\displaystyle \partial_t v+ (w+v)\cdot\nabla v+v\cdot\nabla w +g(\bar{\rho})\na n\\
	\displaystyle \qquad\qquad\qquad=\nabla\big(K(\rho)\Delta n +\frac{1}{2}K'({\rho})\na n\cdot \na (n+2\tilde{\rho})\big)+F,
 \\
	 v|_{t=0} =\mathcal{P}u_0,\ n|_{t=0}=0.
\end{array}
\right.
\end{eqnarray}
where we denote
$g(\rho)=P'(\rho)/\rho$ and
\begin{equation}\label{defofF}
\begin{aligned}
F&=-\big(g(\rho)-g(\bar{\rho})\big)\nabla n
-\big(g(\rho)-g(\tilde{\rho})\big)\na\tilde{\rho}\\
&\qquad\qquad+\nabla \big[\big(K(\rho)-K(\tilde{\rho})\big)\Delta\tilde{\rho}+\f{1}{2}\big(K'(\rho)-K'(\tilde{\rho})\big)\nabla |\tilde{\rho}|^2\big].
\end{aligned}
\end{equation}

For the system $\eqref{EK1}$, we recall the global existence result established in \cite{MR3613503},
\begin{theorem}\label{thmek1}
	 {Theorem 2.2  ,\cite{MR3613503}} :
	
	 There exists  a small number $\delta>0$ and a large integer $N\geq 6$, such that if the initial data satisfy
	\begin{equation*}
	\begin{aligned}
	&\|\mathcal{P}^{\perp}u_0\|_{H^N}+\|\rho_0-\bar{\rho}\|_{H^{N+1}}+\|x(\rho_0-\bar{\rho},\mathcal{P}^{\perp} u_0)\|_{L^2}\\
	&\quad+\|u_0\|_{W^{6,1}}+\|\rho_0-\bar{\rho}\|_{W^{7,1}}\leq \delta_1,
	\end{aligned}
	\end{equation*}
	then the Cauchy problem \eqref{EK1} admits a global solution $(\varrho,u)$ in\\ $C\left([0,\infty),H^{N+1}\times H^N\right)$ satisfying for any $t\geq 0,$
	\begin{align}\label{bound for varrho}
	|\varrho(t)|\leq \frac{1}{4} \min\{\bar{\rho},1\},
	\end{align}
	Moreover, there exist $\alpha>1$ and $C>1$ such that
	\begin{equation}\label{est:decay}
	\begin{aligned}
	\sup_{t\geq 0} \big(\|u(t)\|_{H^N}+\|\varrho\|_{H^{N+1}}+(1+t)^{\alpha}\|(\varrho,u)\|_{W^{6,\infty}}\big)\leq C\delta_1.
	\end{aligned}
	\end{equation}
\end{theorem}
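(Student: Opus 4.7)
Since the statement is the Audiard--Haspot global existence result for irrotational Euler--Korteweg, the natural plan is the space-time resonance method of Germain--Masmoudi--Shatah adapted to the mixed acoustic/Schr\"odinger dispersion of \eqref{EK1}. The curl-free property of $w$ is propagated by \eqref{EK1}, so one may write $w=\nabla\phi$ and symmetrize by introducing a complex unknown $z=a(D)\varrho+\mathrm{i}\,b(D)\phi$ with Fourier multipliers $a,b$ chosen so that the linearization about $(\bar\rho,0)$ becomes a scalar dispersive equation
\begin{equation*}
\partial_t z+\mathrm{i}\,\omega(D)z=\mathcal{N}(z,\bar z),\qquad \omega(\xi)=|\xi|\sqrt{\bar\rho\bigl(g(\bar\rho)+K(\bar\rho)|\xi|^2\bigr)},
\end{equation*}
with a quadratic (semilinear modulo the quasilinear Korteweg correction) right-hand side. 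The symbol $\omega$ is acoustic ($\sim|\xi|$) at low frequencies and Schr\"odinger-like ($\sim|\xi|^2$) at high frequencies, so after frequency localization the linear propagator $e^{-\mathrm{i}t\omega(D)}$ obeys a three-dimensional dispersive bound at some rate $(1+t)^{-\alpha}$ with $\alpha>1$.

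I would then close a bootstrap on $[0,T^\ast)$ driven by three norms: a high-order Sobolev energy $\|(\varrho,w)\|_{H^{N+1}\times H^N}$, a weighted quantity $\|Jz\|_{L^2}$ built from the Klainerman-type operator $J=x+t\nabla\omega(D)$ (which commutes with $\partial_t+\mathrm{i}\omega(D)$), and a decay norm $(1+t)^{\alpha}\|(\varrho,w)\|_{W^{6,\infty}}$. The energy estimate comes from differentiating \eqref{EK1}, symmetrizing through $g/K$, and using the Benzoni-Gavage--Danchin--Descombes \emph{gauge} cancellation to absorb the fourth-order capillary contribution without losing a derivative; feeding the decay norm into the nonlinear flux makes the right-hand side $L^1_t$-integrable. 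A parallel estimate for $\|Jz\|_{L^2}$ follows from $[J,\partial_t+\mathrm{i}\omega(D)]=0$ together with a normal-form substitution that turns the quadratic nonlinearity into a cubic one plus a boundary term.

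The main work, and the main obstacle, is the dispersive $W^{6,\infty}$ bound. Writing the profile $f(t)=e^{\mathrm{i}t\omega(D)}z(t)$, Duhamel produces bilinear integrals with phase $\Phi(\xi,\eta)=\omega(\xi)\mp\omega(\xi-\eta)\mp\omega(\eta)$; a normal form kills the part of $\mathcal{N}$ supported away from the time-resonant set $\{\Phi=0\}$, and on the complement one integrates by parts in $\eta$, generating factors of $1/|\nabla_\eta\Phi|$ that are paid for by the weighted bound on $Jz$. The genuine difficulty is that the acoustic regime of $\omega$ creates a large set of time resonances (the parallel-cone $\xi\parallel\eta$ with matching signs is fully resonant), so the resonance analysis must be performed separately on low, high, and intermediate frequency scales, and the transition at medium frequencies, where $\omega$ is neither homogeneous of degree $1$ nor of degree $2$, must be handled by a careful paradifferential decomposition. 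Once the dispersive bound is closed, the three a priori estimates propagate on all of $[0,\infty)$ and yield \eqref{est:decay}.
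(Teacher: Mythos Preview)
Your outline is a reasonable sketch of the Audiard--Haspot argument, but note that the paper does not prove this theorem at all: it is stated as a quotation of Theorem~2.2 of \cite{MR3613503} and used as a black box. The paper's contribution begins only afterwards, with Theorem~\ref{thmek2}, where the global decaying solution $(\varrho,w)$ furnished by Theorem~\ref{thmek1} is treated as a known background field and the perturbation system \eqref{EK2} is analyzed by energy estimates alone. So there is no proof in the paper to compare your proposal against; what you have written is an outline of the cited reference, not of anything the present authors do.
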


To prove the first part of Theorem \ref{thmek0}, it suffices to show the following:

\begin{theorem}\label{thmek2}
Suppose ($\vr,w$) are global solutions to the system \eqref{EK1} given by Theorem \ref{thmek1}. There exists $\epsilon_1$ small enough, if
$\|\mathcal{P}u_0\|_{H^s}\leq \epsilon_1$
$(\f{5}{2}< s\leq 3)$,
then one can find some $T_{\epsilon_1}\gtrsim \epsilon_{1}^{-1}$ such that the system \eqref{EK2}  has a unique solution and  $(n,v)\in C([0,T_{\epsilon_1}],H^{s+1}(\mathbb{R}^3)\times H^s(\mathbb{R}^3)).$
\end{theorem}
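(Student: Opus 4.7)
My plan is to run an $H^s\times H^{s+1}$ energy estimate on system \eqref{EK2} for $(v,n)$, exploiting the integrable time decay $\|(\varrho,w)(t)\|_{W^{6,\infty}}\lesssim \delta_1(1+t)^{-\alpha}$ with $\alpha>1$ supplied by Theorem \ref{thmek1}. Since at $t=0$ the only nontrivial datum is $v|_{t=0}=\mathcal{P}u_0$ of size $\epsilon_1$ while $n|_{t=0}=0$, a natural energy $E_s(t)\simeq \|v(t)\|_{H^s}^{2}+\|n(t)\|_{H^{s+1}}^{2}$ starts at $E_s(0)\lesssim \epsilon_1^{2}$, so a differential inequality of the schematic form $\partial_t E_s\lesssim E_s^{3/2}+(\delta_1+\delta_1(1+t)^{-\alpha})E_s$ is exactly what is needed to yield a lifespan $T_{\epsilon_1}\gtrsim 1/\sqrt{E_s(0)}\gtrsim 1/\epsilon_1$.

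The obstacle in setting up a naive $H^s$ estimate is the third-order term $\nabla(K(\rho)\Delta n)$ in the equation for $v$: direct testing with $\Lambda^{2s}v$ loses a derivative when $\ls$ falls on the varying coefficient $K(\rho)$. To circumvent this I will follow the gauge strategy of \cite{MR2354691}: introduce $a(\rho)=\sqrt{K(\rho)/\rho}$, well defined thanks to $K\ge K_0>0$ and the bound $|\varrho|\le \bar\rho/4$ from \eqref{bound for varrho}, set $z=a(\rho)\nabla n$, and work with the complex unknown $\mathcal U = v+\I z$, which satisfies a degenerate quasilinear Schr\"odinger-type equation whose principal part is skew-symmetric modulo a controllable zeroth-order remainder. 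Applying $\ls$ to this reformulated equation, pairing with $\ls\mathcal U$, and adding a low-order $L^2$ piece for $(v,n)$ yields a closed energy identity for a norm equivalent to $\|v\|_{H^s}^{2}+\|n\|_{H^{s+1}}^{2}$.

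The right-hand side of this identity decomposes into three types. The purely quasilinear commutators coming from $(v+w)\cdot\nabla v$, $(v+w)\cdot\nabla n$, and the gauge-induced nonlinearities in $n$ produce, via Moser/Kato--Ponce commutator estimates (appendix) and Sobolev embedding for $s>5/2$, a cubic term bounded by $\|(v,\nabla n)\|_{W^{1,\infty}}E_s\lesssim E_s^{3/2}$. The linear-in-$(v,n)$ contributions that carry the background profile (transport by $w$, stretching $v\cdot\nabla w$, and gauge factors with $\nabla\tilde\rho$) are absorbed into $(\|w\|_{W^{s+1,\infty}}+\|\nabla\tilde\rho\|_{W^{s,\infty}})E_s\lesssim \delta_1(1+t)^{-\alpha}E_s$. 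Finally the forcing $F$ defined in \eqref{defofF} vanishes at $n=0$ and is schematically of the form $n\cdot G(\tilde\rho,\nabla\tilde\rho,\Delta\tilde\rho,\nabla|\nabla\tilde\rho|^2)$, which, using $N\gg s$ and tame product estimates, contributes at most $C\delta_1 E_s$. Collecting the bounds one obtains
\[
\partial_t E_s \le C E_s^{3/2}+C\bigl(\delta_1+\delta_1(1+t)^{-\alpha}\bigr)E_s.
\]
Absorbing the linear terms into an integrating factor $\exp\!\bigl(C\delta_1 t+C\delta_1\!\int_0^t(1+\tau)^{-\alpha}d\tau\bigr)$, which stays $\le 2$ on $[0,T_{\epsilon_1}]$ when $\delta_1$ is small, and combining with a local well-posedness statement for \eqref{EK2} in $H^{s+1}\times H^s$ (again obtained through the Schr\"odinger reformulation starting from $n(0)=0$) and a standard continuation argument, a bootstrap shows $E_s(t)\le 4E_s(0)$ on an interval of length $T_{\epsilon_1}\sim 1/\epsilon_1$, hence the desired $(n,v)\in C([0,T_{\epsilon_1}];H^{s+1}\times H^s)$.

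The subtlest point, which I expect to be the main obstacle, is that the gauge has to be built from the \emph{full} density $\rho=\tilde\rho+n$ and not from $\tilde\rho$ alone; hence commutators between $\ls$ and $a(\rho)$, or derivatives of $a(\rho)$ hitting $\Delta n$, mix background pieces (controlled in high Sobolev norms by the $\delta_1$ smallness of Theorem \ref{thmek1}) with perturbation pieces (controlled only by the much weaker $\epsilon_1$ smallness in $H^s$). Careful bookkeeping is required to make sure that every factor of $n$ or $\nabla n$ which escapes the principal skew-symmetric quadratic form is paid for by $E_s^{1/2}$ rather than by norms of $\tilde\rho$; this is the step where the low-regularity range $5/2<s\le 3$ becomes delicate.
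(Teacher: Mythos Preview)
Your overall strategy is sound, but one concrete error breaks the final step. You allow a non-decaying linear contribution $C\delta_1 E_s$ (from your treatment of $F$ via tame estimates) and then claim the integrating factor $\exp\bigl(C\delta_1 t+\dots\bigr)$ stays $\le 2$ on $[0,T_{\epsilon_1}]$ ``when $\delta_1$ is small''. But $T_{\epsilon_1}\sim\epsilon_1^{-1}$, and in the statement $\delta_1$ is fixed \emph{before} $\epsilon_1$ is chosen; since $\epsilon_1$ may be arbitrarily small relative to $\delta_1$, the factor $e^{C\delta_1/\epsilon_1}$ is unbounded and the bootstrap fails. The remedy, which the paper carries out systematically, is never to place an $H^s$ norm on the background: using the one-sided product estimate $\|fg\|_{H^s}\lesssim\|f\|_{W^{s,\infty}}\|g\|_{H^s}$ (Lemma~\ref{classical com}) and always putting $W^{s+3,\infty}$ on $(\varrho,w)$ yields $\|F\|_{H^s}\lesssim(\|n\|_{W^{1,\infty}}+\|\varrho\|_{W^{s+3,\infty}})\|(n,\nabla n)\|_{H^s}$ (Corollary~\ref{esofF}). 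The resulting inequality $\partial_t E_s\le C_1\bigl(E_s^{3/2}+C\delta_1(1+t)^{-\alpha}E_s\bigr)$ has \emph{only} the integrable coefficient in front of $E_s$, so the Gr\"onwall factor becomes the $t$-independent constant $M=e^{CC_1\delta_1/(\alpha-1)}$ and the continuation argument goes through.

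On the method itself, your Schr\"odinger reformulation $\mathcal U=v+\I\, a(\rho)\nabla n$ is the original route of \cite{MR2354691}, whereas the paper works directly with the weighted energy $\mathcal E_s=\tfrac12\int\phi_s(\rho)\bigl(g(\bar\rho)|\Lambda^s n|^2+K(\rho)|\Lambda^s\nabla n|^2+\rho|\Lambda^s v|^2\bigr)$ with $\phi_s=(\rho K)^{s/2}$. After this first gauge the paper isolates a residual top-order term involving $\mathcal P v$ (equation~\eqref{I5final}) that does \emph{not} cancel when the velocity has curl; it is removed by adding a second energy piece $\tfrac12\int|\mathcal P(\varphi_s(\rho)\Lambda^s v)|^2$ with a second gauge $\varphi_s$ chosen so that $(\varphi_s^2)'=-2\rho\phi_s'$. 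If you pursue your reformulation you will meet the same issue in disguise (the principal operator is $\nabla\nabla\cdot$, which is degenerate on $\mathcal P v$), and your claim that the principal part is ``skew-symmetric modulo a controllable zeroth-order remainder'' is only true once the $\phi_s$ weight is inserted and the rotational piece is accounted for; you should make that step explicit rather than fold it into the phrase ``gauge strategy''.
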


{\bf Proof of Theorem \ref{thmek2}}
The Cauchy problem \eqref{EK2} is well-posed in $C([0,T),H^s),s>5/2$ for some positive $T>0$ (e.g. \cite{MR2354691}). We are thus left to show the lifespan of \eqref{EK2} has the order of $\bigO(\epsilon_1^{-1})$ by developing an a priori estimate.

The direct energy estimate will cause the loss of derivatives due to the lack of dissipation and
the presence of the high order term $\nabla(K(\rho)\Delta n)$ in \(\eqref{EK2}_2\). To get round this difficulty, one needs to introduce certain weight function (which is called 'gauge' function \cite{MR2354691}) coherent to the energy functional to eliminate this kind of terms.

Denote $\Lambda$ the Fourier multiplier with symbol $\langle\xi\rangle=\sqrt{1+|\xi|^2}$.
We will work on the following energy functionals:
	\begin{equation*}
	\begin{aligned}
\mathcal{E}_{s}(t)=
\frac{1}{2}\int \phi_s(\rho)\big(g(\bar{\rho})|\ls n|^2+K(\rho)|\Lambda^{s}\nabla n|^2+\rho |\Lambda^{s}v|^2\big)\,\diff x, % 0\leq k\leq 3,
\end{aligned}
\end{equation*}
where the 'gauge' function $\phi_s(\rho)=(\rho K(\rho))^{\frac{s}{2}}.$ The role of this gauge function is to avoid the loss of derivative when $\mathcal{P} v=0$. We shall comment that $\phi_s(\rho)$ depend on both $n$ and $\varrho$ (recall that $\rho=\bar{\rho}+\vr+n$).

%Performing $\Lambda^{s}$ on the system \eqref{EK2}, and multiplying it by $(\Lambda^{s}(n-\Delta n),\Lambda^{s} v)$,
Taking the time derivative on functional $\mathcal{E}_s$ and using the equations
\eqref{EK2},
one obtains
\begin{equation}\label{sec2_eq:1}
\begin{aligned}
\frac{\diff}{\diff t} \mathcal{E}_{s}(t)&=
\int g(\bar{\rho})\partial_t \phi_s |\ls n|^2+\partial_t(K\phi_s(\rho))|\Lambda^{s}\nabla n|^2+\partial_t(\rho \phi_s(\rho) )|\Lambda^{s}v|^2\,\diff x
\\
&\quad-\int g(\bar{\rho})\phi_s(\rho)\ls n\ls \div(nw)- \rho \phi_s(\rho)
{\Lambda^{s} v}\cdot \Lambda^{s}F\diff x\\
&\quad-
g(\bar{\rho})\int \phi_s(\rho) \big(\ls n
\ls \mathrm{div}(\rho v)+
\rho
{\Lambda^{s} v}\Lambda^s\nabla n\big)
\diff x\\
&\quad -
\int \rho \phi_s(\rho)%\overline
{\Lambda^{s} v} \Lambda^{s}\big((w+v)\cdot\nabla v+v\cdot\nabla w\big)+K\phi_s(\rho){\Lambda^{s}\nabla n}\Lambda^{s} \nabla\mathrm{div}(nw)\diff x
\\
&\quad-%\Re
\int K\phi_s(\rho)%\overline
{\Lambda^{s}\nabla n}
\Lambda^{s}\nabla\mathrm{div}(\rho v)\\
&\qquad\qquad -\rho\phi_s(\rho) %\overline
{\Lambda^{s} v}\cdot \Lambda^{s} \nabla\big[K(\rho)\Delta n+\frac{1}{2}K'(\rho)\na n\cdot\na (n+2\tilde{\rho})\big]\,\diff x \\
&=\colon I_1+I_2+I_3+I_4+I_5.
\end{aligned}
\end{equation}

Among the five terms \(I_i, 0\leq i\leq 5\), the term \(I_5\) is the most difficult one since it involves the loss of derivatives, we will postpone handling it later on and first deal with the other four easier terms.

For $I_1$, by rewriting \(\eqref{EK}_1\) as $\partial_t\rho=-\mathrm{div}\big(\rho(w+v)\big)$, one may estimate
\begin{align}\label{sec2_eq:2}
|I_1|\lesssim \|\rho\|_{W^{1,\infty}}(\|w\|_{W^{1,\infty}}+\|v\|_{W^{1,\infty}})\|(\nabla n,v)\|_{H^s}^2.
\end{align}
Note that we have used the fact that $K(\rho)$ and $\phi_s(\rho)$ are smooth functions and the a priori  assumption $\bar{\rho}/2\leq \rho\leq 3\bar{\rho}/2.$

For $I_2,$ by H\"older inequality, product estimate \eqref{appendix:2.5} and Corollary $\ref{esofF},$
\begin{equation}
   \begin{aligned}
       & |I_2|\lesssim (\|n\|_{W^{1,\infty}}+\|w\|_{W^{s+1,\infty}})\|(n,\na n)\|_{H^{s}}^2
        +\|v\|_{H^s}\|F\|_{H^s}\\
       &\quad \lesssim (\|(n,\nabla n)\|_{H^s}+\|(w,\vr)\|_{W^{s+3,\infty}})\|(n,\na n,v)\|_{H^{s}}^2.
    \end{aligned}
\end{equation}

  We remind the reader that
  in order to make use of the fast decay property of $\|(\vr,w)(t)\|_{L_x^{\infty}},$
  we shall always attribute $L_x^{\infty}$ norm on $(\vr,w)$ when we estimate the product terms that could be considered roughly as $\vr n, w n$.

For $I_3$, integrating by parts, applying product estimates \eqref{appendix:2.5} and commutator estimates \eqref{appendix:3},\eqref{appendix:4},  %\ref{crude comu}
we get:
\begin{equation}\label{sec2_eq:3}
\begin{aligned}
I_3&=g(\bar{\rho}) \int \nabla (\phi_s(\rho))%\overline
{\Lambda^{s} n}\Lambda^{s}(\rho v)\,\diff x+g(\bar{\rho})\int \phi_s(\rho)%\overline
{\Lambda^{s}\nabla n}[\Lambda^{s},\rho]v\,\diff x\\
&\lesssim \big(\|(n,v)\|_{W^{1,\infty}}+\| (\varrho,w)\|_{W^{s,\infty}}\big)\|(n,\nabla n,v)\|_{H^s}^2.
\end{aligned}
\end{equation}

More precisely, writing \([\Lambda^{s},\rho]v=[\Lambda^{s},\varrho]v+[\Lambda^{s},n]v\),
we use \eqref{appendix:3} to get
\begin{align*}
\|[\Lambda^{s},\varrho]v\|_{L^2}\lesssim \|\varrho\|_{W^{s,\infty}}\|v\|_{H^{s-1}},
\end{align*}
and use \eqref{appendix:4} to obtain
\begin{align*}
 \|[\Lambda^{s},n]v\|_{L^2}\lesssim \|\nabla n\|_{L^{\infty}}\|v\|_{{H}^{s-1}}+\|v\|_{L^{\infty}}\|n\|_{{H}^{s}}.
\end{align*}

In the same fashion as $I_3$, the term  $I_4$ can be handled by
\begin{equation}\label{sec2_eq:4}
\begin{aligned}
I_4&=\f{1}{2}\int\mathrm{div}\big(\rho\phi_s(w+v)\big)\big|\Lambda^{s} v\big|^2
+\mathrm{div}\big(K\phi_s w\big)\big|\Lambda^{s} \nabla n\big|^2\diff x\\
&\quad-\int \rho\phi_s\Lambda^{s} v\cdot\big(\big[\Lambda^{s},w+v\big]\nabla v+\Lambda^{s}(v\cdot \nabla w)\big)\,\diff x\,\\
&\quad-\int K\phi_s(\rho) {\Lambda^{s} \nabla n}\big(\Lambda^{s}\nabla(n\mathrm{div} w)+[\Lambda^{s} \nabla, w]\nabla n\big)\,\diff x\\
&\lesssim\big(\|(n,v)\|_{W^{1,\infty}}
+\| (\varrho,w)\|_{W^{s+2,\infty}}\big)\|(n,\na n,v)\|_{H^s}^2.
\end{aligned}
\end{equation}

To estimate $I_5,$ it will be helpful to extract the principle term of $\Lambda^{s} \nabla\big[K(\rho)\Delta n+\frac{1}{2}K'(\rho)\na n\cdot\na (n+2\tilde{\rho})\big]$ that may lose derivatives.
At first,
\begin{equation}\label{sec2:eqK1}
\begin{aligned}
&\ls \nabla \big(\f{1}{2}K'(\rho)\na n\cdot\na (n+2\tilde{\rho})\big)\\
&\quad=\f{1}{2} [\ls \nabla,K'(\rho)](\na n \cdot\na (n+2\tilde{\rho}))+\f{1}{2}K'(\rho)\ls\na(\na n \cdot\na (n+2\tilde{\rho}))\\
&\quad=l.o.t+K'(\rho)\na\rho \cdot\ls \nabla\nabla n
\end{aligned}
\end{equation}
where the notation $l.o.t$ stands for terms which can be controlled by
\begin{equation*}
\begin{aligned}
\|l.o.t\|_{L^2}\lesssim (\|(n,v)\|_{H^s}+\|\varrho\|_{W^{s+2,\infty}})\|(\nabla n,v)\|_{H^{s}}.
\end{aligned}
\end{equation*}

Similarly, we have that:
\begin{equation}\label{sec2:eqK2}
\begin{aligned}
\ls \nabla (K(\rho)\Delta n)&=[\ls,\nabla K]\Delta n+K\ls \nabla\Delta n+\nabla K\ls \Delta n+[\ls, K]\nabla\Delta n\\
&=l.o.t+K\ls \nabla\Delta n+ K'\ls \Delta n\nabla\rho+sK'\nabla\rho\cdot\ls\na\na n
\end{aligned}
\end{equation}
Note that we have used the commutator estimate \eqref{appendix:7}-\eqref{appendix:8} to get that:
\begin{equation}
\begin{aligned}
 &[\Lambda^s,K]\nabla\Delta n-sK'\nabla\rho\ls\na\na n\\
 &=[\Lambda^s,K]\nabla\Delta n-\frac{1}{\mathrm{i}}\{\langle\xi\rangle^s,K\}(D)\nabla\Delta n-sK'\nabla\rho \Lambda^{s-2}\nabla\nabla n\\
& =l.o.t
\end{aligned}
\end{equation}
where we used Poisson bracket:
$\{a,b\}=\partial_{\xi}a\cdot\partial_x b-\partial_{\xi}b \cdot\partial_x a.$
We are now in position to estimate $I_5.$
In view of \eqref{sec2:eqK1} and \eqref{sec2:eqK2},
we have by integrating by parts that:
\begin{equation}\label{I5}
\begin{aligned}
I_5&=\int \rho\phi_s\Lambda^s v\big(K\ls\nabla\Delta n+K'\ls\Delta n\nabla \rho+(s+1)K'\nabla\rho\cdot\ls\nabla\nabla n\big)\\
&\qquad\qquad\qquad\qquad\qquad\qquad-K\phi_s\Lambda^{s}\nabla n\ls  \nabla\mathrm{div}(\rho v)\,\diff x\\
&=\colon \sum_{j=1}^{3}\int {\Lambda^{s}\partial_j n}\cdot G_{s}^{j}\,\diff x+R,
\end{aligned}
\end{equation}
where
$R$ stands for the terms that can be controlled by
$$R\lesssim (\|(n,\na n,v)\|_{H^s}+\|(\vr,w)\|_{W^{s+2,\infty}})\|(n,\na n,v)\|_{H^s}^2.$$

and
\begin{equation*}
\begin{aligned}
G_{s}^{j}&=-\rho\phi_s K'\na \rho\cdot \big((s+1)\na \ls v_j+\partial_j\ls v\big)
+\partial_j\div(\rho K\phi_s\ls v)
-K\phi_s\p_j\div\ls (\rho v)
\end{aligned}
\end{equation*}
To find the cancellation, we extract the lower order terms as:
\begin{equation*}
\begin{aligned}
G_{s}^{j}
&=-\rho\phi_s K'\na \rho\cdot \big((s+1)\na \ls v_j+\partial_j\ls v\big)+\rho\partial_j(K\phi_s)\ls \div v\\
&\quad+\rho(K\phi_s)'\nabla\rho \cdot\p_j\ls v-K\phi_s[\ls,\rho]\p_j\div v+K\phi_s v\cdot \nabla \Lambda^{s} \partial_j n+l.o.t\\
\end{aligned}
\end{equation*}
%where we have used the fact that $\mathrm{div} v=\mathrm{div} (\mathcal{P}^{\perp}v)$.
Note again that by Lemma \ref{secondcomu},
\begin{equation}
\begin{aligned}
    &[\Lambda^{s},\rho]\mathrm{div}\partial_j v-s\nabla\rho\cdot\Lambda^{s}\partial_j\mathcal{P}^{\perp} v\nonumber\\
    &=[\Lambda^{s},\rho]\mathrm{div}\partial_j v-s\na\rho\cdot\Lambda^{s-2}(1-\Delta)\p_j \mathcal{P}^{\perp} v\nonumber\\
    &=[\Lambda^{s},\rho]\mathrm{div}\partial_j v+s\na\rho\cdot \nabla \Lambda^{s-2}\div\p_j v-s\na\rho \cdot\Lambda^{s-2}\p_j \mathcal{P}^{\perp} v\nonumber\\
    &=[\Lambda^{s},\rho]\mathrm{div}\partial_j v-\f{1}{\mathrm{i}}\{\langle\xi\rangle^s,\rho\}(D)(\mathrm{div}\partial_j v)-s\na\rho \cdot\Lambda^{s-2}\p_j \mathcal{P}^{\perp} v\\
    &=l.o.t
\end{aligned}
\end{equation}
%\begin{equation*}
  %  \begin{aligned}
  % -K\phi_s[\ls,\rho]\p_j\div v= %\mathrm{i} K\phi_s %\{\langle\xi\rangle^s,\rho\}(D)\partia%l_j\div v +l.o.t
  %  \end{aligned}
%\end{equation*}
We thus have, by combining the fact $v=\mathcal{P}v+\mathcal{P}^{\perp}v$ and
$\partial_j(\mathcal{P}^{\perp}v)_{l}=\partial_l(\mathcal{P}^{\perp}v)_{j},$ that
\begin{equation}\label{def of G}
    \begin{aligned}
     G_s^j&=   \big((K\phi_s)'\rho-sK\phi_s-(s+2)\rho\phi_s K'\big)\na \rho\cdot\ls\partial_j (\mathcal{P}^{\perp}v)+\rho(K\phi_s)'\partial_j\rho\ls \div \mathcal{P}^{\perp}v\\
     &\quad-(s+1)\rho\phi_s\nabla K\cdot\ls \nabla (\mathcal{P}v)_j+K\phi_s v\cdot \nabla \Lambda^{s} \partial_j n+\rho K\phi_s'\na\rho \cdot\partial_j\ls\mathcal{P}v+l.o.t\\
     &=\rho (K\phi_s)'\big(\partial_j \rho \ls \div(\mathcal{P}^{\perp}v)-\na\rho\cdot\ls(\p_j\mathcal{P}^{\perp}v) \big)\\
     &\quad-(s+1)\rho\phi_s\nabla K\cdot\ls \nabla (\mathcal{P}v)_j+K\phi_s v\cdot \nabla \Lambda^{s} \partial_j n+\rho K\phi_s'\na\rho \cdot\partial_j\ls\mathcal{P}v+l.o.t\\
    \end{aligned}
\end{equation}
 Note that in the second equality, we have used the definition of $\phi_s(\rho)=(\rho K(\rho))^{\frac{s}{2}}$
 which satisfies:
 $$2(K\phi_s)'\rho=sK\phi_s+(s+2)\rho\phi_sK'.$$

We first observe that the contribution of the term \(K\phi_sv\cdot\na \ls \partial_j n\) of \(G_s^j\) in the integral \(I_5\) may be easily handled by integration by parts. Indeed,

\begin{equation}\label{sec2_eq:7}
\begin{aligned}
\int {\Lambda^{s}\nabla n}\cdot (K\phi_s v\cdot \nabla\Lambda^{s} \nabla n)\,\diff x&=-\f{1}{2}\int \mathrm{div}(K\phi_s v)|\Lambda^{s} \nabla n|^2\,\diff x\\
&\lesssim \|v\|_{W^{1,\infty}}(1+\|(\varrho,n)\|_{W^{1,\infty}})\|(\nabla n,v)\|_{H^s}^2.
\end{aligned}
\end{equation}
 Moreover, integrating by parts twice, using the fact that $\partial_j(\mathcal{P}^{\perp}v)_{l}=\partial_l(\mathcal{P}^{\perp}v)_{j}$, one gets
\begin{equation}\label{sec2_eq:9}
\begin{aligned}
&\int \rho (K\phi_s)'{\Lambda^{s}\partial_j n}\big(\partial_j \rho \ls \div(\mathcal{P}^{\perp}v)-\na\rho\cdot\ls(\p_j\mathcal{P}^{\perp}v) \big)\,\diff x\\
&=\int
\ls  (\mathcal{P}^{\perp}v)_{j}\bigg(\partial_l\big((K\phi_s)'\rho\partial_l\rho\big)
{\Lambda^{s}\partial_j n} -
\partial_j \big((K\phi_s)'\rho\partial_l\rho\big){\Lambda^{s}\partial_l n}\bigg)\,\diff x\\
&\lesssim \| (n,\varrho)\|_{W^{2,\infty}}\|(\nabla n,v)\|_{H^{s}}^2.
\end{aligned}
\end{equation}
Similarly, using that $\div(\mathcal{P}v)=0,$ we have by integrating by parts twice
\begin{equation}\label{sec2_eq:7.5}
\begin{aligned}
&\int \rho\phi_s\na K {\Lambda^{s}\partial_j n}\cdot \ls\na(\mathcal{P}v)_j \,\diff x\\
&=\int
\ls(\mathcal{P}v)_{j}\big(\partial_j(\rho\phi_s\nabla K)\ls \nabla n-\div(\rho\phi_s\nabla K)\ls\partial_j n\big)\diff x\\
&\lesssim \|(\vr,n)\|_{W^{1,\infty}}\|(\na n,v)\|_{H^s}^2.
\end{aligned}
\end{equation}

Gathering \eqref{I5}-\eqref{sec2_eq:9}, we achieve that:
\begin{equation}\label{I5final}
I_5=\sum_{j=1}^3\int \rho K\phi_s' \ls\partial_j n \na\rho \cdot \partial_j \ls\mathcal{P}v\diff x+R
\end{equation}
where
$$R\lesssim (\|(n,\na n,v)\|_{H^s}+\|(\vr,w)\|_{W^{s+3,\infty}})\|(n,\na n,v)\|_{H^s}^2.$$

One see that $I_5$ is likely to lose one derivative if $\mathcal{P}v$ is not identical to zero.
To overcome this difficulty, it is necessary to introduce another gauge function to find more cancellations.
Performing $\Lambda^{s}$ on \(\eqref{EK2}_2\), and multiplying it by a function
$\varphi_{s}(\rho)$ (which will be determined later) that is positive on the interval $\bar{\rho}/2\leq\rho\leq 3\bar{\rho}/2$, we get
\begin{equation}\label{sec2_eq:11}
\begin{aligned}
&\partial_t(\varphi_{s}(\rho)\Lambda^{s} v)+g(\bar{\rho})\nabla(\varphi_{s}\Lambda^{s}n)-\nabla[\varphi_s \ls \big(K(\rho)\Delta n+K'(\rho)\nabla n\cdot(\nabla n+2\nabla\tilde{\rho})\big)]\\
&=-\Lambda^{s} (K(\rho)\Delta n)\nabla\varphi_{s}-\varphi_{s}\Lambda^{s}(u\cdot\nabla v)+\partial_t\varphi_{s}\Lambda^{s} v+g(\bar{\rho}) \Lambda^{s} n\nabla\varphi_{s}-\varphi_{s}\Lambda^{s}(v\cdot\nabla w)\\
&\qquad +\Lambda^s (K'(\rho)\nabla n\cdot(\nabla n+2\nabla\tilde{\rho}))\nabla\varphi_s+\varphi_s \Lambda^s F\\
&=-\Lambda^{s} (K(\rho)\Delta n)\nabla\varphi_{s}-\varphi_{s}(u\cdot\nabla \Lambda^{s} v)+l.o.t,
\end{aligned}
\end{equation}
where $l.o.t$ stands for the terms whose $L^2$ norm can be controlled by
\begin{equation}\label{sec2_eq:12}
\begin{aligned}
\|l.o.t\|_{L^2}\lesssim
(\|(\varrho,w)\|_{W^{s+3,\infty}}+\|(n,\nabla n,v)\|_{H^s})\|(n,\nabla n,v)\|_{H^s}.
\end{aligned}
\end{equation}

Multiplying \eqref{sec2_eq:11} by $\mathcal{P}(\varphi_{s}(\rho)\Lambda^{s} v)$ and using integration by parts, one has
\begin{equation}\label{sec2_eq:13}
\begin{aligned}
\frac{1}{2}\frac{\diff}{\diff t} \int|\mathcal{P}(\varphi_{s}(\rho)\Lambda^{s} v)|^2\,\diff x&=- \int {\mathcal{P}(\varphi_{s}(\rho)\Lambda^{s} v)}
\Lambda^{s}(K(\rho)\Delta n)\nabla\varphi_{s}\,\diff x\\
&\quad+\frac{1}{2}\int
\mathrm{div}(\varphi_{s}^2u)|\Lambda^{s}\mathcal{P} v|^2\,\diff x+R\\
&= \sum_{j,l=1}^3\int \partial_{l}\varphi_s \Lambda^{s}(K(\rho)\partial_j n )
{\partial_j\big[\mathcal{P}\left(\varphi_{s}\Lambda^{s} v\right)\big]_{l}}\,\diff x+R\\
&= \sum_{j,l=1}^3\int\partial_{l}\varphi_s \varphi_s K(\rho)\Lambda^{s}\partial_j n
{\partial_j(\mathcal{P} \Lambda^{s} v)_{l}}\,\diff x+R
\end{aligned}
\end{equation}
where $R$ represents the terms that do not lose derivatives, that is
\begin{equation}\label{sec2_eq:14}
\begin{aligned}
|R|\lesssim (\|(\varrho,w)\|_{W^{s+3,\infty}}+\|(n,\nabla n,v)\|_{H^s})\|(n,\nabla n,v)\|_{H^{s}}^2
%\mathcal{E}_{k}(t).
\end{aligned}
\end{equation}
We mention that we have used the estimate
\begin{equation*}
\begin{aligned}
\|[\partial_j(Id-(\Delta)^{-1}\partial_l\mathrm{div}),f]g\|_{L^2}\lesssim \|g\|_{L^2}\| f\|_{H^s},
\end{aligned}
\end{equation*}
due to \eqref{corcom1}.

We now choose  $\varphi_s(\rho)$ satisfying the condition
$(\varphi_s^2(\rho))'=-2\rho\phi_s'
$ which
cancels the the first terms of  \eqref{I5final} and
\eqref{sec2_eq:13}. More precisely, one can choose
\begin{equation}\label{defofgauge2}
\varphi_s(\rho)=\sqrt{A_s(\rho)}
\end{equation}
% Note that we denote $K_{max}=\sup_{\bar{\rho}/2\leq \rho(t,x)\leq 3\bar{\rho}/2}K(\rho).$
where $A_s$ is one primitive of function: $\rho\rightarrow -2\rho\phi_s'(\rho)$ which has positive lower bound on the interval: $\bar{\rho}/2\leq \rho\leq 3\bar{\rho}/2.$
(For some special case, say $K(\rho)\equiv1$, one could write
$\phi_s(\rho)$ explicitly by choosing $\varphi_s(\rho)=\sqrt{-\f{2s}{s+2}\rho^{s+2}+M_s}$
with a constant $M_s=2\cdot(\frac{3}{2}\bar{\rho})^{\frac{s}{2}+1}+1$ which ensures that $\varphi_s(\rho)>1$ uniformly in $s$ on the interval $\bar{\rho}/2\leq \rho(t,x)\leq 3\bar{\rho}/2$).

Gathering the estimates \eqref{I5final}-\eqref{sec2_eq:14}, we find that:
\begin{equation}\label{sec2_eq:16}
\frac{1}{2}\frac{\diff}{\diff t} \int|\mathcal{P}(\varphi_{s}(\rho)\Lambda^{s} v)|^2\,\diff x +I_5
=R
\end{equation}
where ${R}$ stands for those terms that do not lose derivatives, namely, we have that:
$$|{R}|\lesssim (\|(\vr,w)\|_{W^{s+3,\infty}}+\|(n,\na n,v)\|_{H^s})\|(n,\nabla n,v)\|_{H^s}^2.$$

We define the modified energy by
\begin{equation*}
\begin{aligned}
{E}_s(t)=\frac{1}{2}%\sum_{k=0}^3
\int \phi_s(\rho)\big(g(\bar{\rho})|\Lambda^{s}n|^2+K(\rho)|\Lambda^{s}\nabla n|^2+ \rho|\Lambda^{s}v|^2\big)+|\mathcal{P}(\varphi_s\Lambda^{s} v)|^2\,\diff x.
\end{aligned}
\end{equation*}
where $\phi_s(\rho)=(\rho K(\rho))^{\frac{s}{2}}.$
We claim that ${E}_s(t)$ %\(\sum_{k=1}^4{E}_4(t)\)
is equivalent to $\|(n,\na n,v)\|_{H^s}^2$
as long as \(\|\vr+n\|_{H^s}\)
is sufficiently small. Since
$K(\rho)\geq K_0>0$ for
$\f{1}{2}\bar{\rho}\leq \rho\leq \f{3}{2}\bar{\rho},$
we see that the first three terms are equivalent to $\|(n,\na n,v)\|_{H^s}^2$,
 we thus only need to take care of the last term in \({E}_s(t)\).
Indeed, by the identity
\begin{equation*}
\begin{aligned}
\mathcal{P}(\varphi_s\Lambda^{s} v)=\varphi_s(\mathcal{P} \Lambda^{s} v)+[\mathcal{P},\varphi_s]\Lambda^{s} v,
\end{aligned}
\end{equation*}
and the commutator estimate \eqref{com2},
\begin{equation}
\begin{aligned}
\|[\mathcal{P},\varphi_s]\Lambda^{s} v\|_{L^2}
\lesssim \|\nabla (\varphi_s(\rho))\|_{H^{s-1}}\|v\|_{H^{s-1}}&=\|\nabla (\varphi_s(\rho)-\varphi_s(\bar{\rho}))\|_{H^{s-1}}\|v\|_{H^{s-1}}\\
&\leq C(\|\rho-\bar{\rho}\|_{L^{\infty}})\|\rho-\bar{\rho}\|_{H^s}\|v\|_{H^{s-1}},
\end{aligned}
\end{equation}
for \(s>\f{5}{2}\).
By recalling $\rho-\bar{\rho}=\vr+n$, one easily see that,  as long as $\|\vr+n\|_{H^s}$ is sufficiently small, it holds that
\begin{equation*}
\begin{aligned}
%\sum_{k=0}^{3}
\|\mathcal{P}(\varphi_s\Lambda^{s} v)\|_{L^2}^2\approx  \|\varphi_s(\mathcal{P} \Lambda^{s} v)\|_{L^2}^2\approx \|\mathcal{P} \Lambda^{s} v\|_{L^2}^2.
\end{aligned}
\end{equation*}
Hence
\begin{align}\label{equivalent identity}
\frac{1}{C_0}\|(n,\nabla n,v)\|_{H^s}^2\leq E_s(t)\leq C_0 \|(n,\nabla n,v)\|_{H^s}^2,
\end{align}
for some positive constant \(C_0\).

We conclude from \eqref{sec2_eq:2}-\eqref{sec2_eq:4} and \eqref{sec2_eq:16} that
\begin{equation*}%\label{energyineq}
\begin{aligned}
\frac{\diff}{\diff t}{E_s}(t)&\lesssim \|(n,\nabla n,v)\|_{H^s}^{3/2} +\|(\varrho,w)\|_{W^{s+3,\infty}}\|(n,\nabla n,v)\|_{H^s}\\
&\leq C_1 \big(E_s^{3/2}(t)+\|(\varrho,w)\|_{W^{s+3,\infty}}E_s(t)\big),
\end{aligned}
\end{equation*}
for some positive constant \(C_1\).
Moreover, by Theorem \ref{thmek1}, one has that:
$$\|(\varrho,w)(t)\|_{W^{s+3,\infty}}\leq C\delta_1(1+t)^{-\alpha}$$
which leads to:
\begin{equation}\label{energyineq}
\begin{aligned}
\frac{\diff}{\diff t}{E_s}(t)&\leq
 C_1 \big(E_s^{3/2}(t)+C\delta_1 (1+t)^{-\alpha}E_s(t)\big)
\end{aligned}
\end{equation}
%{\color{blue}{[It is much clearer to formulate an estimate for \(J_5\) like: inserting A and B, we get C,	we finally conclude from C, D, E that \(J_5\) satisfies...}}

\noindent{{\bf{Conclusion for Theorem \ref{thmek1}.}}
Theorem \ref{thmek1} stems from a standard continuity argument. We define the maximal existence time $T_{\star}$ by
\begin{equation}
\begin{aligned}
T_{\star}=\sup\{T\big|
(n,\nabla n,u)\in C({[0,T],H^s}):\ E_s(T)\leq 2MC_0 \epsilon_{1}^2 \},
\end{aligned}
\end{equation}
where $M=e^{\frac{CC_1}{\alpha-1}\delta_1}$, $C,\alpha$ are constants that appear in the statement of Theorem \ref{thmek1}.
In view of \eqref{equivalent identity},
as long as $\epsilon_1$ is sufficitenly small, we have: $\|n(t)\|_{L^{\infty}}\leq \bar{\rho}/2$ for any $0\leq t\leq T_{*}$,
which, combined with \eqref{bound for varrho}, yields $\bar{\rho}/2\leq \rho\leq 3\bar{\rho}/2$ for $0\leq t\leq T_{*}.$

Define further $T_0=\min\{T_{\star},\kappa\epsilon_{1}^{-1}\}$ with $\kappa$ being small to be chosen later.
By  \eqref{energyineq}, one easily gets by Gr\"onwall's inequality for $t\leq T_{0}\leq \kappa\epsilon_{1}^{-1}$ that
\begin{equation*}
\begin{aligned}
E_s(t)& \leq \exp\big(\int_0^t C_1 C\delta_1 (1+\tau)^{-\alpha}\diff \tau\big)\bigg( E_s(0)+C_1 \int_0^t E_s^{3/2}(\tau)\,\diff \tau\bigg)\\
%&\leq \exp(\int_0^t C_1 \|(\vr,w)(s)\|_{W^{6,\infty}}\diff s)\bigg( E_4(0)+C_1 \int_0^t E_4^{3/2}(s)\,\diff s\bigg)
&\leq M\bigg( E_s(0)+C_1 \int_0^t E_s^{3/2}(\tau)\,\diff \tau\bigg)\\
&\leq M\left(C_0\epsilon_{1}^2+C_1 T_0 (2KC_0\epsilon_{1}^2)^{\frac{3}{2}}\right)
\leq \frac{3}{2}KC_0\epsilon_{1}^2,
\end{aligned}
\end{equation*}
by choosing $\kappa=\big(2C_1C_0^{\frac{1}{2}}(2K)^{\frac{2}{3}}\big)^{-1}$, which leads to, by combining the local existence theory, $T_0=\kappa\epsilon_{1}^{-1}<T_{\star}$.

{\bf Proof of the second part of Theorem \ref{thmek0}}
To finish the proof of Theorem \ref{thmek0}, we suppose further that $\mathcal{P}u_0\in H^{N}$, we prove briefly that the solution  belongs to \eqref{EK} satisfies:
$$(\rho-\bar{\rho},u) \in C([0,T_{\epsilon_1}],H^{N+1}(\mathbb{R}^3)\times H^{N}(\mathbb{R}^3)).$$
%We still suppose $P(\rho)=\frac{1}{2}\rho^2$ for clarity.
Let us define functional:
\begin{equation*}
\begin{aligned}
\mathscr{E}_N(t)=\frac{1}{2}%\sum_{k=0}^N
\int \phi_N(\rho)\big(g(\bar{\rho})%(\rho K)^{\frac{N}{2}}
|\Lambda^{N}(\rho-\bar{\rho})|^2+ K(\rho)
|\Lambda^{N}\nabla \rho|^2 +\rho|\Lambda^{N}v|^2\big)+|\mathcal{P}(\varphi_N\Lambda^{N} v)|^2\,\diff x.
\end{aligned}
\end{equation*}
where
$\phi_N=(\rho K)^{\frac{N}{2}}$, $\varphi_N(\rho)$ is defined in
\eqref{defofgauge2}. %introduced to create some cancellations.
By calculations similar to that in the proof of Theorem \ref{thmek2}, one could prove that:
\begin{equation}\label{energyineqpart2}
 \frac{\diff}{\diff t}\mathscr{E}_N(t)   \lesssim \|\rho-\bar{\rho}\|_{W^{2,\infty}\cap H^s}\mathscr{E}_N(t).
\end{equation}
Note that we will always use tame estimate \eqref{appendix:2.75} and commutator estimate \eqref{appendix:4},\eqref{appendix:7} for the product terms and commutator terms in the expression of $\frac{\diff}{\diff t}\mathscr{E}_N(t).$

In light of the fact $\rho-\bar{\rho}\in C([0,T_{\epsilon_1}],H^{s+1}(\mathbb{R}^3))$
($s>5/2$), energy inequality \eqref{energyineqpart2}
 and Gr\"onwall inequality, we have for any $0\leq t\leq T_{\epsilon_1}$
$$\mathscr{E}_N(t)   \lesssim e^{ct}\mathscr{E}_N(0)$$
for some constant $c.$

\begin{section}{Large time existence of two-fluid Euler-Maxwell equation}
To show the versatility of the approach proposed in the Introduction, we will prove the similar results analogue to Theorem \ref{thmek0}
for 'two-fluid' Euler-Maxwell system.
As a physical model to describe the  dynamics of plasma, namely electrons and ions, 'two-fluid' Euler-Maxwell system reads:
\begin{equation} \label{EM0}
 \left\{
\begin{array}{l}
\displaystyle \pt n+\mathrm{div}\big((1+n)v\big)=0,\\
\displaystyle \iota(\pt v+  v\cdot\nabla v)+d\na n+E+v\times B=0, \\
\displaystyle
\pt \vr +\div\big( (1+\vr) u\big)=0,\\
\displaystyle \pt u+  u\cdot\na u +\na\vr-E-u\times B=0
, \\
\pt B +\na \times E=0,\\
\pt E-\f{1}{\iota}\na\times B=[(n+1)v-(\vr+1)u]\\
\displaystyle \div B=0, \div E=\vr-n\\
\displaystyle (n,v,\vr,u,E,B)|_{t=0}=(n^0,v^0,\vr^0,u^0,E^0,B^0).
%w|_{t=0} =\mathcal{P}^{\perp}u_0 ,\vr|_{t=0}=\rho_0-1.
\end{array}
\right.
\end{equation}
where $1+n,v$(resp. $1+\vr,u$) stand for the density and velocity of  electrons (resp. ions), $E$, $B$ stand for the electric and magnetic field, %respectively.
$\iota, d$ are two constant parameters. Note that we have chosen the reference state being $(1,0,1,0,0,0)$.
Before going further, we shall point out some observations. At first, from the equation for $B,E$, $\div B(t)=0,\div E(t)=\vr(t)-n(t)$ for any $t>0$, as long as $\div B^0=0, \div E^{0}=\vr^0-n^0.$
Secondly, as indicated in \cite{MR3742601} there are two important %quantities corresponding to
generalized vorticities $Y(t)=: B-\frac{1}{\iota}\na\times v$ and $W(t)=: B+\na\times u$ which satisfy the
evolution equations:
\begin{eqnarray}\label{genvor}
\pt Y=\na\times(v\times Y)\qquad
\pt W=\na\times(v\times W)
%(B-\f{1}{\iota}\na\times v)=\na\times(v\times (B-\f{1}{\iota}\na\times v))\label{genvor1}\\
%\pt(B+\na\times u)=\na\times(u\times(B+\na\times u))\label{genvor2}
\end{eqnarray}
Note that by direct energy estimate, if $Y^0=B^0-\f{1}{\iota}\na\times v^0=0$ and $W^0=B^0+\na\times u^0=0$, then this property will propagate, and we call this kind of flow as 'generalized irrotational flow'.

 Unlike the compressible Euler equation for which singularity formation happens for finite time \cite{MR3360663},\cite{MR815196}
global small smooth solutions of \eqref{EM0} has been constructed in \cite{MR3450481}
with generalized irrotational initial datum by using 'space-time resonance' technique and delicate Fourier analysis.
%, (see definition below) %neutral
 %perturbations to the equilibria.
%We aim to study the long time behavior when the initial datum is not ¡®irrotational¡¯.
%We now recall the global results proved in \cite{MR3742601}.
\begin{theorem}
[Theorem 1.1 of \cite{MR3450481}  ]\label{thmem1}
There  exists $\delta_0>0$ small, $N_0$ large enough, $\beta>1$, if the initial data satisfy the following:
$$\|(n^0,v^0,\vr^0,u^0,E^0,B^0)\|_{H^{N_0}\cap Z}\leq \delta_0,$$
$$\div E^0-\vr^0+n^0=0, \qquad B^0-\f{1}{\iota}\na\times v^0=B^0+\na\times u^0=0,$$
then the system \eqref{EM0} has a unique global solution in $C([0,\infty),H^{N_0})$ and satisfies the
following:
\begin{equation}
\|(n,v,\vr,u,E,B)\|_{H^{N_0}}+(1+t)^{\beta}\|(n,v,\vr,u,E,B)\|_{W^{4,\infty}}\lesssim \delta_0.
\end{equation}
\end{theorem}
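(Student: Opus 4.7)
\textbf{Proof proposal for Theorem \ref{thmem1}.}
Since this is a global existence theorem for the two-fluid Euler--Maxwell system with generalized irrotational data, the plan is to combine the standard high-order energy estimates (which give no decay but preserve smoothness) with a space-time resonance analysis (which yields the dispersive decay $\|(n,v,\vr,u,E,B)\|_{W^{4,\infty}}\lesssim \delta_0(1+t)^{-\beta}$). The overall scheme is a bootstrap in two norms: a standard Sobolev norm $H^{N_0}$ and a weighted $Z$-norm (defined through Fourier multipliers and moments in $x$) that encodes the decay.

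First, I would exploit the generalized irrotationality assumption: from \eqref{genvor}, if $Y^0=0$ and $W^0=0$, then $Y(t)\equiv 0$, $W(t)\equiv 0$, so $\na\times v=\iota B$ and $\na\times u=-B$ for all times. Together with $\div E=\vr-n$ and $\div B=0$, these relations let one reduce the unknowns to a system of the scalar densities $n,\vr$, the curl-free parts of $v,u$, and the Helmholtz decomposition of $(E,B)$. Next, I would linearize around $(1,0,1,0,0,0)$ and diagonalize using Fourier transform; this yields two distinct dispersive modes (``electron'' and ``ion'' branches) with symbols $\Lambda_\pm(\xi)=\sqrt{a_\pm(|\xi|)}$ whose large-$\xi$ behaviour is governed by the parameters $\iota,d$, together with a Maxwell/photon branch of speed $\iota^{-1/2}$. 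The non-degenerate curvature of the characteristic surfaces is what will power the dispersive decay.

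The heart of the argument is then the space-time resonance method of Germain--Masmoudi--Shatah: write each unknown as a profile $f_a(t)=e^{it\Lambda_a}u_a(t)$, so that quadratic interactions take the form
\begin{equation*}
\p_t \hat{f}_a(t,\xi)=\sum_{b,c}\int_{\R^3} e^{it\Phi_{abc}(\xi,\eta)} m_{abc}(\xi,\eta) \hat{f}_b(t,\eta)\hat{f}_c(t,\xi-\eta)\,\diff\eta+\text{cubic},
\end{equation*}
with phase $\Phi_{abc}(\xi,\eta)=\Lambda_a(\xi)\mp\Lambda_b(\eta)\mp\Lambda_c(\xi-\eta)$. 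One analyzes the time-resonant set $\{\Phi=0\}$, the space-resonant set $\{\na_\eta\Phi=0\}$ and their intersection. Away from space resonances one integrates by parts in $\eta$; away from time resonances one integrates by parts in $t$ (normal form). The $Z$-norm is tailored so that both operations close, and the cubic terms are estimated by Strichartz/dispersive bounds coming from $\|e^{it\Lambda_a}\|_{L^1\to L^\infty}\lesssim t^{-3/2}$. Combined with the standard energy identity for the symmetrizable hyperbolic quasilinear system (which gives $\frac{\diff}{\diff t}\|U\|_{H^{N_0}}\lesssim \|\na U\|_{L^\infty}\|U\|_{H^{N_0}}$), a continuity argument closes the bootstrap whenever $\delta_0$ is small enough and $\beta>1$.

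The main obstacle, and the reason the ``two-fluid'' case is substantially harder than the one-fluid version handled in \cite{MR3742601}, is the resonance analysis: with two acoustic speeds plus the light speed $\iota^{-1/2}$, the phases $\Phi_{abc}$ can have both time and space resonances simultaneously on non-trivial hypersurfaces, and generically these cannot be removed by a single normal form transformation. One must verify a quantitative non-degeneracy (transversality) condition on $\{\Phi=\na_\eta\Phi=0\}$ relating the Hessian of $\Phi$ and the symbol $m_{abc}$, and carefully choose the $Z$-norm so that the loss in the stationary-phase estimates at the space-time-resonant points is absorbed. All the remaining steps—local well-posedness, propagation of the generalized vorticity constraints, and the final Sobolev/$Z$ bootstrap—are by now standard given this resonance analysis.
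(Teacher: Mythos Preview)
The paper does not contain a proof of Theorem~\ref{thmem1}: the statement is quoted verbatim from \cite{MR3450481} and used as a black box (together with the subsequent Remark on the $Z$-norm) in the proof of Theorem~\ref{thm for EM}. There is therefore nothing in the present paper to compare your proposal against.

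That said, your sketch is a reasonable outline of the strategy actually carried out in \cite{MR3450481}: diagonalize the linearized system, propagate the generalized irrotationality constraints $Y\equiv W\equiv 0$, run a bootstrap coupling the $H^{N_0}$ energy with a carefully designed $Z$-norm, and close via a space-time resonance analysis of the phases $\Phi_{abc}$. Your identification of the main difficulty (nontrivial space-time resonant sets arising from the multiple dispersion relations, requiring a quantitative transversality condition and a $Z$-norm adapted to the resulting losses) is also accurate. If you were asked to supply a proof here rather than a citation, you would need to make the $Z$-norm and the nondegeneracy conditions on $\Phi_{abc}$ precise and verify them for the specific symbols of \eqref{EM0}, which is the bulk of the work in \cite{MR3450481}; your proposal correctly flags this but does not attempt it.
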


\begin{remark}
In the statement of last Theorem, $Z$ stands for the norm involving localization of both space and frequency which is compatible to the fractional weighted norm $\|x^{1^{+}}f\|_{L^2}$. For precise definition, one could refer to definition 4.1 of \cite{MR3450481}.
\end{remark}
\begin{remark}
%For 'two-fluid' Euler-Maxwell equations,
In contrast with the global existence for \textbf{small} irrotational solutions, the formation of singularity is likely to happen for large initial data. In \cite{MR3841988}, the blow up result is obtained for
spherically symmetric solutions to 'one-fluid' Euler-Poisson equations, which is a special case of two-fluid' Euler-Maxwell equations by neglecting the motion of ions and the effects of magnetic field. The method proposed by them seems could be extend to the general case.

\end{remark}
In the following, we aim to study the long time existence  of solution of \eqref{EM0} with general data,
%see what happens with the general initial data,
that is $B^0-\f{1}{\iota}\na\times v^0 \neq
 0$ or $B^0+\na\times u^0\neq 0$. %For notational convenience, we define $Y(t)=B(t)-\f{1}{\iota}\na\times v(t), W(t)=B(t)+\na\times u(t)$, then
 We have by
 \eqref{genvor} and identity:
 $\na\times(v\times Y)=Y\cdot \na v-v\cdot\na Y-Y\div v$ that
 \begin{equation}\label{Y}
 \pt Y=Y\cdot \na v-v\cdot\na Y-Y\div v, \quad \pt W=W\cdot \na u-u\cdot\na W-W\div u
 \end{equation}
for which there is no any dispersive or dissipation structure that can be used. It seems that one can only expect the lifespan of $Y, W$ in some Sobolev space (say $H^s, s>\frac{5}{2}$) is proportional to $1/\|Y^0\|_{H^s}$.

 The following is the main result concerning to 'two-fluid' Euler-Maxwell equations:
\begin{theorem}\label{thm for EM}
  There exists three constants $\delta, \epsilon>0$ small, $\f{5}{2}<s\leq3$ and $N_0$ large. If the initial datum $(n^0,v^0,\vr^0,u^0,E^0,B^0)$
  satisfies the following:
  \begin{eqnarray*}
  \div B^0=0, \qquad \div E^0+n^0-\vr^0=0,\\
  \|(n^0,\mathcal{P}^{\perp}v^0+\f{1}{\iota}(\Delta)^{-1}\na\times B^0,\vr^0,\mathcal{P}^{\perp}u^0-(\Delta)^{-1}\nabla\times B^0, E^0,B^0)\|_{H^{N_0}\cap Z}< \delta,\\
    \|(0,\mathcal{P} v^0-\f{1}{\iota}(\Delta)^{-1}\nabla\times B^{0},0,\mathcal{P} u^0+(\Delta)^{-1}\nabla\times B^{0},0)\|_{H^s}<\epsilon.
 \end{eqnarray*}
 Then the Euler-Maxwell equation \eqref{EM0}
 admits a solution in $C([0,T_{\epsilon}],H^s)$
  with $T_{\epsilon}\gtrsim \epsilon^{-1}$.

In addition if
%$(\mathcal{P} v_0),\vr_0,u_0,E_0,B_0)\in H^{N_0}$,
$\mathcal{P} v^0-\f{1}{\iota}(\Delta)^{-1}\nabla\times B^{0},\mathcal{P} u^0+(\Delta)^{-1}\nabla\times B^{0}$ belongs to $H^{N_0}$,
then %one could deduce by standard energy estimates that
the solution lies in
$C([0,T_{\epsilon}],H^{N_0})$
with an exponential growth:
$$\|(n,v,\vr,u,E,B)(t)\|_{H^{N_0}}\lesssim e^{ct}\|(n,v,\vr,u,E,B)(0)\|_{H^{N_0}}.$$
\end{theorem}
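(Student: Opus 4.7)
\medskip
\noindent\textbf{Proof proposal.} The plan is to repeat the splitting scheme of Section 2. First, apply Theorem \ref{thmem1} to the ``generalized irrotational'' initial datum $(n^0,\mathcal{P}^{\perp}v^0+\tfrac{1}{\iota}(\Delta)^{-1}\na\times B^0,\vr^0,\mathcal{P}^{\perp}u^0-(\Delta)^{-1}\nabla\times B^0,E^0,B^0)$; by the hypotheses of Theorem \ref{thm for EM} this datum has size $\le\delta$ in $H^{N_0}\cap Z$ and satisfies the constraints $\div B^0=0$, $\div E^0+n^0-\vr^0=0$, $Y^0=W^0=0$ required by Theorem \ref{thmem1}. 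This yields a global smooth solution $(\tilde n,\tilde v,\tilde\vr,\tilde u,\tilde E,\tilde B)$ with the integrable decay $\|(\tilde n,\tilde v,\tilde\vr,\tilde u,\tilde E,\tilde B)(t)\|_{W^{4,\infty}}\lesssim\delta(1+t)^{-\beta}$, $\beta>1$. Unlike Euler--Korteweg, \eqref{EM0} is a first order symmetric hyperbolic system, so no gauge function analogous to $\phi_s$ or $\varphi_s$ will be needed and a standard weighted $H^s$ energy estimate should suffice.

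Write $(n,v,\vr,u,E,B)=(\tilde n+n',\tilde v+v',\tilde\vr+\vr',\tilde u+u',\tilde E+E',\tilde B+B')$. The remainder $(n',v',\vr',u',E',B')$ has data of size $\le\epsilon$ in $H^s$, preserves the Maxwell constraints $\div B'=0$, $\div E'-\vr'+n'=0$ (these hold at $t=0$ by construction and propagate), and satisfies an Euler--Maxwell type system with linear background coefficients and quadratic self-interactions. I will work with the natural symmetrizer of the two-fluid system,
\begin{equation*}
E_s(t)=\frac{1}{2}\int \iota(1+n)|\ls v'|^2+d|\ls n'|^2+(1+\vr)|\ls u'|^2+|\ls \vr'|^2+|\ls E'|^2+\tfrac{1}{\iota}|\ls B'|^2 \,\diff x,
\end{equation*}
which is equivalent to $\|(n',v',\vr',u',E',B')\|_{H^s}^2$ while $\|\tilde n+n',\tilde\vr+\vr'\|_{L^\infty}$ stays small. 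I apply $\ls$ to the perturbation system and test against the corresponding weighted unknowns; the leading order cancels through the three standard identities (pressure/continuity duality, the pointwise Lorentz cancellation $v'\cdot(v'\times B)=0$, and the Poynting identity $\int (E\cdot\nabla\times B-B\cdot\nabla\times E)\,\diff x=0$). The residual splits into commutators of the type $[\ls,1+n]\div v'$, $[\ls,v']\cdot\nabla v'$, controlled by \eqref{appendix:4}, and background-coupling products such as $\ls v'\cdot\ls(v'\cdot\nabla\tilde v)$, $\ls v'\cdot\ls(\tilde v\times B'+v'\times\tilde B)$, $\ls n'\cdot\ls(n'\tilde v)$, controlled by \eqref{appendix:2.5} with the $L^\infty$ factor attributed to the background. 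The outcome is
\begin{equation*}
\tfrac{\diff}{\diff t}E_s(t)\lesssim E_s(t)^{3/2}+\|(\tilde n,\tilde v,\tilde\vr,\tilde u,\tilde E,\tilde B)\|_{W^{s+1,\infty}}\,E_s(t).
\end{equation*}

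Since $s+1\leq 4$, Theorem \ref{thmem1} bounds the second coefficient by $\delta(1+t)^{-\beta}$, integrable in $t$, so the Gr\"onwall/bootstrap argument used to conclude Theorem \ref{thmek2} applies verbatim and delivers $T_\epsilon\gtrsim\epsilon^{-1}$. The higher-regularity statement is then obtained by re-running the same scheme at the $H^{N_0}$ level using the tame estimate \eqref{appendix:2.75} and the commutator bound \eqref{appendix:7}; the low-order norms of the coefficients are controlled on $[0,T_\epsilon]$ by the first part, so the $H^{N_0}$ norm can grow at most exponentially.

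The main difficulty I anticipate is bookkeeping rather than analytic: one must verify that the symmetric-hyperbolic cancellations of the full system survive subtraction of the background so that only controllable background$\times$perturbation products remain. In particular the Lorentz cross-terms $\tilde v\times B'$ and $v'\times\tilde B$ carry no pointwise cancellation, and the Amp\`ere current $(n+1)v-(\vr+1)u$ generates linear couplings $\tilde n v'+n'\tilde v-\tilde\vr u'-\vr'\tilde u$ that must be absorbed into the integrable factor $(1+t)^{-\beta}$ using the $W^{s+1,\infty}$ decay of the background. Once that is done, the remainder of the argument is a routine adaptation of the Euler--Korteweg scheme without the gauge machinery.
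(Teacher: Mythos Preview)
Your proposal is correct and follows essentially the same route as the paper: the same splitting into a global generalized-irrotational background plus a small-$H^s$ remainder, the same symmetrizer energy $E_s$ with weights $d,\ \iota(1+n),\ 1,\ (1+\vr),\ 1,\ \tfrac{1}{\iota}$, and the same commutator bookkeeping leading to $\tfrac{\diff}{\diff t}E_s\lesssim E_s^{3/2}+\|(\text{background})\|_{W^{s+1,\infty}}E_s$. The paper organizes the residual into six pieces $J_1,\dots,J_6$ and notes $J_3=0$ by the continuity equation, which is precisely your ``pressure/continuity duality''; your Lorentz and Poynting cancellations are the ones implicitly used there as well, and your observation that $s+1\le 4$ matches the $W^{4,\infty}$ decay of Theorem~\ref{thmem1} is exactly the restriction $s\le 3$ in the statement.
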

%\begin{remark}Note that compared to the results in \cite{MR3742601} where long time behavior of one fluid Euler-Maxwell (that is $(n,v)=0$ in \eqref{EM0}) is showed, we do not need to assume the generalized vorticity belongs to some weighted space. Of course, the drawback of this result is that we do not know whether the irrotational part of velocity $\cP {u},\cP {v}$ stay bounded in  higher regularity space $C([0,T_{\ep}],H^{N_0})$ or not, due to the lack of time decay estimates of $\cP {u},\cP {v}$.
%\end{remark}
% We propose here a very simple proof, under the  knowledge in advance of the global existence results with irrotational initial data. Our strategy is, to split the system into two systems. More precisely,
 \begin{proof}[Proof of Theorem \ref{thm for EM}]
As explained in the introduction, we split the system into two systems. More precisely, we write
 $$(n,v,\vr,u,E,B)=(n_1,v_1,\vr_1,u_1,E_1,B_1)+(n_2,v_2,\vr_2,u_2,E_2,B_2),$$
where $(n_1,v_1,\vr_1,u_1,E_1,B_1)$ is the global solution  of system \eqref{EM0} provided by Theorem \ref{thmem1} with initial data
$(n^0,\mathcal{P}^{\perp}v^0+\f{1}{\iota}(\Delta)^{-1}\na\times B^0,\vr^0,\mathcal{P}^{\perp}u^0-(\Delta)^{-1}\nabla\times B^0, E^0,B^0)$. Then $(n_2,v_2,\vr_2,u_2,E_2,B_2)$ solves the equations
which is a perturbation of the original system  by that of
$(n_1,v_1,\vr_1,u_1,E_1,B_1)$. %with initial datum $(0,\mathcal{P} v^0-\f{1}{\iota}(\Delta)^{-1}\nabla\times B^0,0,\mathcal{P} u^0+ (\Delta)^{-1}\nabla\times B^0,0,0).$
That is:
\begin{equation} \label{EM2}
 \left\{
\begin{array}{l}
\displaystyle \pt n_2+\div((1+n)v_2+n_2v_1)=0,\\
\displaystyle \iota(\pt v_2+  v\cdot\na v_2+v_2\cdot\na v_1)+d\na n_2+E_2+v\times B_2+v_2\times B_1=0, \\
\displaystyle
\pt \vr_2 +\div( (1+\vr) u_2+\vr_2 u_1)=0,\\
\displaystyle \pt u_2+  u\cdot\na u_2+u_2\cdot\na u_1 +\na\vr_2-E_2-u\times B_2+u_2\times B_1=0
, \\
\pt B_2 +\na \times E_2=0,\\
\pt E_2-\f{1}{\iota}\na\times B_2=[(n+1)v_2+n_1 v_2-(\vr+1)u_2-\vr_1u_2]\\
\displaystyle \div B_2=0, \div E_2=\vr_2-n_2\\
\displaystyle (n_2,v_2,\vr_2,u_2,E_2,B_2)|_{t=0}=(0,\mathcal{P} v^0-\f{1}{\iota}(\Delta)^{-1}\na\times B^0,0,\mathcal{P} u^0+ (\Delta)^{-1}\nabla\times B^0,0,0).
%w|_{t=0} =\mathcal{P}^{\perp}u_0 ,\vr|_{t=0}=\rho_0-1.
\end{array}
\right.
\end{equation}

 We shall then prove Theorem \ref{thm for EM} by direct energy estimate. Define the energy functional:
 \begin{eqnarray*}
 \mathcal{E}_{s}=\f{1}{2}\big(\int d|\ls n_2|^2+ \iota (1+n)|\lk v_2|^2\diff x+\int |\ls \vr_2|+(1+\vr)|\ls u|^2\diff x\\
 +\int |\ls E|^2+\f{1}{\iota}|\ls B|^2\diff x \big)
 \end{eqnarray*}
 where $n=n_1+n_2,\vr=\vr_2+\vr_2$.

 Taking the time derivative of $\cE_{s}$ and using the equations \eqref{EM2}, we easily get:
 \begin{eqnarray*}
 \pt \mathcal{E}_s
 &=&- \int d{\ls n_2}\bigg(\ls\div\big((1+n)v_2\big)-\div\left ((1+n)\ls v_2\right)\bigg)\\
 &&\qquad \qquad\qquad\qquad +{\ls \vr_2}\bigg(\ls\div\left((1+\vr)u_2\right)-\div\left ((1+\vr)\ls u_2\right)\bigg)
 \diff x\\
 &&- \int d {\ls n_2}\ls \div(n_2 v_1)+{\ls \vr_2}\ls \div(\vr_2 u_1)\diff x\\
%-\Re \int d \overline{\lk n_2}\lk\big(v_1\cdot\na n_2+[\lk, v_1]\na n_2+\lk (n_2\div v_1)\big)+ \overline{\lk \vr_2}\lk\big(u_1\cdot\na \vr_2+[\lk, u_1]\na \vr_2+\lk (\vr_2\div u_1)\big)\d x \\
 &&-\bigg( \int \iota (1+n){\ls v_2}(v\cdot\na \ls v_2)+(1+\vr){\ls u_2}(u\cdot\na \ls u_2)\diff x\\
 &&\qquad\qquad\qquad\qquad\qquad\qquad\qquad\qquad\qquad-\int \iota\pt n|\ls v_2|^2+\pt\vr|\ls u_2|^2\diff x\bigg)
 \\
 &&- \int\iota (1+n){\ls v_2}\bigg(  [\ls, v_2]\nabla v_2+\ls (v_2\cdot \na v_1)\bigg)\\
 &&\qquad\qquad \qquad\qquad\qquad\qquad+(1+\vr){\ls u_2}\bigg(  [\ls, u_2]\na u_2+\ls (u_2\cdot \na u_1) \bigg)\diff x\\
 %\end{eqnarray*}
 %\begin{eqnarray*}
 &&-\int(1+n){\ls v_2}\ls(v\times B_2+v_2\times B_1)+(1+\vr){\ls u_2}\ls(u\times B_2+u_2\times B_1)\diff x\\
 &&+ \int {\ls E_2}\bigg([\ls, n]v_2-[\ls, \vr]u_2+n_2v_1-\vr_2u_1\bigg)\diff x\\
 &=:& J_1+J_2+\cdots J_6
 \end{eqnarray*}

 We now estimate $J_1$-$J_6$ rigorously.
 For $J_1$, we write:
 $$\ls\div\big((1+n)v_2\big)-\div\left ((1+n)\ls v_2\right)=[\ls, n]\div v_2+v_2\cdot\ls \na n+[\ls,\na n,v_2],$$
 $$\ls\div\big((1+\vr)u_2\big)-\div\left ((1+\vr)\ls u_2\right)=[\ls, \vr]\div u_2+u_2\cdot\ls \na \vr+[\ls,\na \vr,u_2],$$
 where we denote for $k\geq 1$, $[\ls, f,g]=\ls(fg)-g\ls f-f\ls g.$

For $J_1$, we use integration by parts and   commutator estimates \eqref{appendix:3}-\eqref{appendix:4} to get:
 \begin{eqnarray}\label{J1}
 J_1\lesssim \big(\|\na (n_2,v_2,\vr_2,u_2)\|_{L^{\infty}}
 +\|(n_1,v_1,\vr_1,u_1)\|_{W^{s+1,\infty}}\big)\|(n,v,\vr,u,E,B)\|_{H^s}^2.
 \end{eqnarray}

 For example, by Lemma \ref{classical com}, we have:
 \begin{equation*}
 \|[\ls,n_1]\div v_2\|_{L^2}\lesssim \| n_1\|_{W^{s,\infty}}\|\na v_2\|_{H^{s-1}}.
 \end{equation*}

  $J_2,$ could be controlled in the same manner:
 \begin{eqnarray}
J_2&=&
 -\Re \int d \overline{\ls n_2}\big(v_1\cdot\na\ls n_2+[\ls, v_1]\na n_2+\ls (n_2\div v_1)\big)\nonumber\\
 &&\qquad \qquad \qquad+ \overline{\ls \vr_2}\big(u_1\cdot\na \ls\vr_2+[\ls, u_1]\na \vr_2+\ls (\vr_2\div u_1)\big)\diff x\nonumber\\
 &\lesssim&\|n_2\|_{H^{k}}^2 \|\na v_1\|_{W^{s,\infty}}+\|\vr_2\|_{H^s}^2\|\na u_1\|_{W^{s,\infty}}.
 \end{eqnarray}
Next, from the equation satisfied by $n,\vr$, $(\eqref{EM0}_1,\eqref{EM0}_3)$: $\pt n+\div((1+n)v)=0,\pt\vr+\div((1+\vr)u)=0$ we have $J_3=0$.

 For $J_4$, by commutator estimate \eqref{appendix:3}-\eqref{appendix:4} again, we have that:
 \begin{eqnarray}
 J_4&\lesssim& (1+\|n\|_{L^{\infty}})\|\ls v_2\|_{L^2}(\|\na v_2\|_{L^{\infty}}\|v_2\|_{{H}^{s}}+\|\na v_1\|_{W^{s,\infty}}\|v_2\|_{H^s})\nonumber\\
 &&\qquad +(1+\|\vr\|_{L^{\infty}})\|\ls u_2\|_{L^2}(\|\na u_2\|_{L^{\infty}}\|u_2\|_{{H}^{s}}+\|\na u_1\|_{W^{s,\infty}}\|u_2\|_{H^s})\nonumber\\
 &\lesssim& \big(\|\na(v_2,u_2)\|_{L^{\infty}}+\|(v_1,u_1)\|_{W^{s+1,\infty}}\big)\|(v_2,u_2)\|_{H^s}^2
 \end{eqnarray}

 Similarly:
 \begin{eqnarray}
 J_5\lesssim \big(\|(v_2,u_2,b_2)\|_{L^{\infty}}+\|(v_1,u_1,B_1)\|_{W^{s,\infty}}\big)\|(v_2,u_2,B_2)\|_{H^{s}}^2.
 \end{eqnarray}
 \begin{eqnarray}\label{J6}
 J_6\lesssim &&\big(\|\na(n_2,v_2,\vr_2,u_2)\|_{L^{\infty}}+\|(n_1,v_1,\vr_1,u_1)\|_{W^{s,\infty}}\big)\nonumber\\
&&\qquad\qquad\qquad\qquad \|(E_2,n_2,v_2,\vr_2,u_2)\|_{H^{s}}^2.
 \end{eqnarray}

 We thus get by collecting the above estimates \eqref{J1}-\eqref{J6}
 \begin{eqnarray*}
 \cE_s &\lesssim&
 \|(n_2,v_2,\vr_2,u_2,E_2,B_2)\|_{H^s}^3\\
 &&+\|(n_1,v_1,\vr_1,u_1,E_1,B_1)\|_{W^{s+1,\infty}}\|(n_2,v_2,\vr_2,u_2,E_2,B_2)\|_{H^s}^2.
 \end{eqnarray*}
 Gronwall's inequality  and continuation arguments then give the lower bound of lifespan $T_{\epsilon}\geq \epsilon^{-1}$. Since it is similar to the case of Euler-Korteweg, we omit the details.

 \end{proof}

\end{section}

\section{Appendix}
%{\color{blue}{[We shall polish this part again: for the inequalities that are well-known or were proven in other references, we just cite directly.]}}

In this appendix, we recall and prove some basic commutator estimates frequently used in showing Theorem \ref{thmek0}, based on a Littlewood-Paley decomposition.
Let $\psi\in[0,1]$ be a cut-off function satisfying
$\psi\equiv 1$ on $B(0,3/2)$ and $\psi\equiv 0$ on $B(0,2)^{c}$.
Set $\phi_j(x)=\phi(2^{-j}x)$, with $\phi(x)=\psi(x)- \psi(2x)$ which is supported on the annulus $\{\frac{3}{4}\leq|x|\leq 2\}$. Then it holds that
 \begin{equation*}
 \begin{aligned}
1=\psi(x)+\sum_{j\geq 1}\phi_{j}(x),\quad \text{for\ all}\ x\in \mathbb{R}^3.
\end{aligned}
\end{equation*}
We also recall the homogeneous dyadic block \(\dot\Delta_k\) defined by
\begin{equation*}
\begin{aligned}
\dot\Delta_kf= \mathcal{F}^{-1}\big(\phi_{k}(\xi)\hat{f}(\xi)\big),\quad \text{for\ all}\ k\in \mathbb{Z},
\end{aligned}
\end{equation*}
and homogeneous low-frequency cut-off operator \(\dot S_l\) given by
\begin{equation*}
\begin{aligned}
\dot S_l=\sum_{k\leq l-1}\dot\Delta_k,\quad \text{for\ all}\ l\in \mathbb{Z}.
\end{aligned}
\end{equation*}
Similarly, the nonhomogeneous dyadic block \(\Delta_k\) and homogeneous low-frequency cut-off operator \(S_l\) are defined respectively by
\begin{equation*}
\begin{aligned}
\Delta_{-1}f= \mathcal{F}^{-1}\big(\psi(\xi)\hat{f}(\xi)\big),\ \Delta_kf= \mathcal{F}^{-1}\big(\phi_{k}(\xi)\hat{f}(\xi)\big),\quad \text{for\ all}\ k\in \mathbb{N}.
\end{aligned}
\end{equation*}
and
\begin{equation*}
\begin{aligned}
\ S_l=\sum_{-1\leq k\leq l-1}\Delta_k,\quad \text{for\ all}\ l\in \mathbb{Z}.
\end{aligned}
\end{equation*}

Let $\mathcal{R}$ be the Riesz potential, and $\Lambda^s$
($s\geq 0$) be the Fourier multiplier with symbol $(1+|\xi|)^\f{s}{2}$. We first prove the following:
%The following estimate is a variant of\cite[Lemma 5.3]{MR3024265}:

\begin{lemma}\label{lemcom1}
	We have for $s>\f{3}{2},$
	%\begin{align}\label{com1}
%	\|[\mathcal{R}, f]\Lambda^s g\|_{L^2}\lesssim \|
%	\nabla f\|_{\dot{B}_{\infty,1}^{0}}\|g\|_{{H}^{s-1}},
	%\end{align}
%In particular, one has %{\color{blue}{[We will use the following Sobolev one, what is the index?]}}
\begin{align}\label{com2}
\|[\mathcal{R}, f]\Lambda^s g\|_{L^2}\lesssim \|
 \na f\|_{H^{s}}\|g\|_{{H}^{s-1}}.
\end{align}
\end{lemma}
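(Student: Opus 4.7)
The plan is to combine a Bony paraproduct decomposition with the symbolic order-zero structure of $\mathcal{R}$, which produces a one-derivative gain in the commutator with a low-frequency multiplication. Writing $f\Lambda^s g = T_f\Lambda^s g + T_{\Lambda^s g}f + R(f,\Lambda^s g)$ together with the same decomposition for $f\mathcal{R}\Lambda^s g$, and using that $\mathcal{R}$ commutes with the Littlewood--Paley projectors $S_{k-1},\Delta_k$, one has
\[
[\mathcal{R},f]\Lambda^s g = [\mathcal{R},T_f]\Lambda^s g + \bigl(\mathcal{R}\,T_{\Lambda^s g}f - T_{\mathcal{R}\Lambda^s g}f\bigr) + \bigl(\mathcal{R}\,R(f,\Lambda^s g) - R(f,\mathcal{R}\Lambda^s g)\bigr).
\]

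The main term is $[\mathcal{R},T_f]\Lambda^s g = \sum_k [\mathcal{R},S_{k-1}f]\Delta_k\Lambda^s g$; each summand is the commutator of the order-zero operator $\mathcal{R}$ with multiplication by the low-frequency function $S_{k-1}f$ (frequencies $\lesssim 2^{k-1}$), applied to an annulus-localized input. The classical symbolic-calculus identity $[\mathcal{R},a]\approx \tfrac{1}{\mathrm{i}}\{r,a\}(D)$ for smooth $a$ on inputs at frequency $\sim 2^k$ (or a direct kernel mean-value argument exploiting smoothness of the Riesz kernel), together with $|\nabla r(\xi)|\lesssim |\xi|^{-1}\sim 2^{-k}$, yields the gain
\[
\|[\mathcal{R},S_{k-1}f]\Delta_k\Lambda^s g\|_{L^2}\lesssim 2^{-k}\|\nabla f\|_{L^\infty}\|\Delta_k\Lambda^s g\|_{L^2}\lesssim 2^{k(s-1)}\|\nabla f\|_{L^\infty}\|\Delta_k g\|_{L^2}.
\]
Quasi-orthogonality of the spectrally concentrated summands then produces $\|[\mathcal{R},T_f]\Lambda^s g\|_{L^2}\lesssim \|\nabla f\|_{L^\infty}\|g\|_{H^{s-1}}$, and the Sobolev embedding $H^s(\mathbb{R}^3)\hookrightarrow L^\infty$, valid for $s>3/2$, absorbs $\|\nabla f\|_{L^\infty}$ into $\|\nabla f\|_{H^s}$.

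For the two remainder groups, no genuine cancellation between the paired terms is actually needed once $s>3/2$; each piece can be bounded individually. For instance, $T_{\Lambda^s g}f = \sum_k S_{k-1}(\Lambda^s g)\Delta_k f$ is quasi-orthogonal, and a Bernstein/Cauchy--Schwarz computation yields $\|S_{k-1}(\Lambda^s g)\|_{L^\infty}\lesssim 2^{5k/2}\|g\|_{H^{s-1}}$; the inequality $2(s+1)>5$ (i.e.\ $s>3/2$) then gives
\[
\|T_{\Lambda^s g}f\|_{L^2}^2 \lesssim \|g\|_{H^{s-1}}^2\sum_k 2^{5k}\|\Delta_k f\|_{L^2}^2 \lesssim \|g\|_{H^{s-1}}^2\|\nabla f\|_{H^s}^2.
\]
The term $T_{\mathcal{R}\Lambda^s g}f$ is handled identically via $L^2$-boundedness of $\mathcal{R}$, and the two diagonal remainders $\mathcal{R}R(f,\Lambda^s g)$ and $R(f,\mathcal{R}\Lambda^s g)$ by the analogous direct Bernstein estimate on the pieces $\Delta_k f\cdot \Delta_j\Lambda^s g$ with $|j-k|\leq 1$, again exploiting $s>3/2$.

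The main obstacle, and the only place where the specific structure of $\mathcal{R}$ (beyond $L^2$-boundedness) is essential, is the derivative-gaining identity for $[\mathcal{R},S_{k-1}f]$ on frequency-localized inputs. Without the $2^{-k}$ gain the main term could only be controlled by $\|\nabla f\|_{L^\infty}\|g\|_{H^s}$, losing one order on $g$ and failing \eqref{com2}; the remainder groups then reduce to routine Bernstein and product bounds.
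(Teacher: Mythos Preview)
Your proposal is correct and follows essentially the same approach as the paper: a paraproduct decomposition isolating the main piece $[\mathcal{R},T_f]\Lambda^s g$, on which a kernel mean-value argument exploiting the order-zero symbol of $\mathcal{R}$ on each dyadic annulus yields the $2^{-k}$ gain, while the remaining pieces are handled by Bernstein. The only differences are organizational: the paper uses the two-term homogeneous decomposition $fg=\dot T_f g+\tilde{\dot T}_g f$ and, for the second group, places $L^\infty$ on $\dot\Delta_j f$ and $L^2$ on $\dot S_j(\Lambda^s g)$ (leading to $\|\nabla f\|_{\dot B^0_{\infty,1}}\|g\|_{H^{s-1}}$), whereas you use the three-term decomposition and the opposite H\"older allocation; both reach the same conclusion under $s>3/2$.
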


\begin{proof} %We only need to show \eqref{com1} since \eqref{com2} is a consequence of \eqref{com1} together with the crude embedding $H^{2}\hookrightarrow \dot{B}_{\infty,1}^{0}.$
	We use decomposition:
	 \begin{align*}
	fg=\sum_{j\in \mathbb{Z}}\dot S_{j-1}f\dot{\Delta}_{j}g+\dot{\Delta}_{j}f \dot S_j g= \dot{T}_{f}g+\tilde{\dot{T}}_{g}f
	\end{align*}
	 to rewrite $[\mathcal{R}, f]\Lambda^s g$ as:
	 \begin{align*}
	[\mathcal{R}, f]\Lambda^s g=[\mathcal{R}, \dot{T}_f]\Lambda^s g+\mathcal{R} (\tilde{\dot{T}}_{\Lambda^s g}f)-\tilde{\dot{T}}_{\mathcal{R} \Lambda^s g}f.
	\end{align*}
	
	The last two terms can be easily treated by Bernstein's inequality:
\begin{equation}\label{appendix:1}
\begin{aligned}
	\|\mathcal{R} (\tilde{\dot{T}}_{\Lambda^s g}f)\|_{L^2}+\|\tilde{\dot{T}}_{\mathcal{R} \Lambda^s g}f\|_{L^2}
	&\lesssim \sum_j \|\dot S_j \Lambda^{s-1}g\|_{L^2}\|\dot{\Delta}_{j}
	\nabla f\|_{L^{\infty}}\\
	&\lesssim \|g\|_{H^{s-1}}\|
	\nabla f\|_{\dot{B}_{\infty,1}^{0}}.
\end{aligned}
\end{equation}
We next handle the first term. Since the frequency of  $\dot{T}_{f}\Lambda^s g$ localizes on the annulus, there exits a $C_{c}^{\infty}$ function $\tilde{\phi}$ supported away from origin such that
$$
	[\mathcal{R}, \dot{T}_f]\Lambda^s g
	=\sum_j \tilde{\phi}(2^{-j}D)\big(\dot S_{j-1}f\dot{\Delta}_{j}\Lambda^s g\big)-\dot S_{j-1}f \big(\tilde{\phi}(2^{-j}D)\dot{\Delta}_{j}\Lambda^s g\big)\\
	=\colon\sum_j A_j.
$$
For $j\leq 0,$  taking advantage of the Bernstein inequality, it is direct to see that:
\begin{equation*}
 \|A_j\|_{L^2}\lesssim \|\dot{S}_{j-1}f\|_{L^{\infty}}\|\dot{\Delta}_j g\|_{L^2}\lesssim \|\nabla f\|_{H^{s-1}} \|\dot{\Delta}_j g\|_{L^2}
\end{equation*}

Now for $j\geq 1,$	denote $\tilde{\chi}_j=\mathcal{F}^{-1}(\tilde{\phi}(2^{-j}\xi)(1+|\xi|^2)^\f{s}{2})$, one may write
\begin{equation*}
\begin{aligned}
	{A_j}(x)&=%2^{js}
	\int \tilde{\chi}_j( y)\dot{\Delta}_{j}g(x-y)(\dot S_{j-1}f(x)-\dot S_{j-1}(x-y))\,\diff y\\
	&=\int \tilde{\chi}_j( y)\dot{\Delta}_{j} g(x-y)\int_{0}^{1}y\cdot \nabla \dot S_{j-1}f(x-y+\tau y)\,\diff \tau\diff y.
\end{aligned}
\end{equation*}
Hence
	\begin{equation*}
	\begin{aligned}
	\|A_j\|_{L^2}\lesssim \|\nabla \dot S_{j-1}f\|_{L^{\infty}}\|\dot{\Delta}_{j}g\|_{L^2} \||\cdot|\tilde{\chi}_j\|_{L^1}\lesssim \|\na f\|_{H^s}\|\dot{\Delta}_{j}g\|_{L^2}2^{j(s-1)}.
	\end{aligned}
	\end{equation*}
Taking $l_j^{2}$ norm of sequence $(A_j)$, one has
	\begin{equation}\label{appendix:2}
	\begin{aligned}
	\|[\mathcal{R}, T_f]\Lambda^s g\|_{L^2}\lesssim
	\|\nabla f\|_{H^s} \|g\|_{H^{s-1}}.
	\end{aligned}
	\end{equation}
	
The desired result \eqref{com2} follows from \eqref{appendix:1} and \eqref{appendix:2}.	
%\eqref{com2} is a consequence of \eqref{com1} and embedding $H^{\f{5}{2}}\hookrightarrow \dot{B}_{\infty,1}^{1}.$
\end{proof}

\begin{corollary}
For any $s>\f{3}{2}$, one has also the commutator estimate:
	\begin{equation}\label{corcom1}
	\begin{aligned}
	\|[\nabla\mathcal{R},f]g\|_{L^2}\lesssim \| \na f\|_{H^{s}}\|g\|_{L^2}.
	\end{aligned}
	\end{equation}
\end{corollary}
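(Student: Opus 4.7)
The plan is to reduce the corollary directly to Lemma~\ref{lemcom1} using that $\mathcal{R}$ is a Fourier multiplier and therefore commutes with each $\partial_j$. Applying Leibniz, I write $\partial_j\mathcal{R}(fg)=\mathcal{R}(\partial_j f\cdot g)+\mathcal{R}(f\,\partial_j g)$, so subtracting $f\,\partial_j\mathcal{R} g=f\,\mathcal{R}\partial_j g$ yields the key identity
\[
[\partial_j\mathcal{R},f]\,g=\mathcal{R}(\partial_j f\cdot g)+[\mathcal{R},f]\,\partial_j g.
\]
The point of this algebraic rewriting is to transfer the extra derivative carried by $\nabla\mathcal{R}$ either onto $f$ (where it will be absorbed by an $L^\infty$ norm of $\nabla f$) or onto $g$ (where it will be absorbed by the $\Lambda^{-s}$ needed to match the form of Lemma~\ref{lemcom1}).

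Next I would bound the two pieces separately. For the first term, $L^2$-boundedness of $\mathcal{R}$, H\"older's inequality, and the Sobolev embedding $H^s\hookrightarrow L^\infty$ (valid since $s>3/2$) give
\[
\|\mathcal{R}(\partial_j f\cdot g)\|_{L^2}\lesssim\|\partial_j f\|_{L^\infty}\|g\|_{L^2}\lesssim\|\nabla f\|_{H^s}\|g\|_{L^2}.
\]
For the second term, I would set $\tilde g:=\Lambda^{-s}\partial_j g$, so that $\Lambda^s\tilde g=\partial_j g$, and apply Lemma~\ref{lemcom1} to obtain
\[
\|[\mathcal{R},f]\,\partial_j g\|_{L^2}=\|[\mathcal{R},f]\,\Lambda^s\tilde g\|_{L^2}\lesssim\|\nabla f\|_{H^s}\|\tilde g\|_{H^{s-1}}\lesssim\|\nabla f\|_{H^s}\|g\|_{L^2},
\]
where in the last step I have used $\|\tilde g\|_{H^{s-1}}=\|\Lambda^{-1}\partial_j g\|_{L^2}\lesssim\|g\|_{L^2}$, which follows from the boundedness of the Fourier multiplier $\xi_j\langle\xi\rangle^{-1}$. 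Summing over $j=1,2,3$ then yields the stated estimate.

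There is no genuine obstacle in this argument: the analytic content is already packaged in Lemma~\ref{lemcom1}, and the corollary amounts to the observation that the lemma provides exactly one derivative of room on $g$, which is precisely the deficit created by replacing the order-$0$ multiplier $\mathcal{R}$ by the order-$1$ multiplier $\nabla\mathcal{R}$. The only point one should check with some care is the commutativity step, which requires $\mathcal{R}$ to be a pure Fourier multiplier (as is the case here).
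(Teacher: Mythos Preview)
Your proof is correct and follows essentially the same route as the paper: both use the identity $[\nabla\mathcal{R},f]g=\mathcal{R}(g\nabla f)+[\mathcal{R},f]\nabla g$ and then control the two pieces separately. The only cosmetic difference is that the paper states $\|[\mathcal{R},f]\nabla g\|_{L^2}\lesssim\|\nabla f\|_{H^s}\|g\|_{L^2}$ as an estimate ``similar to \eqref{com2}'' (leaving the reader to rerun the Littlewood--Paley argument), whereas you reduce it literally to Lemma~\ref{lemcom1} via the substitution $\tilde g=\Lambda^{-s}\partial_j g$, which is a tidy way to avoid repeating any analysis.
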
	
	\begin{proof}
We have the following estimates similar to \eqref{com2}:
	\begin{align}
		\|[\mathcal{R}, f]\na  g\|_{L^2}\lesssim \|
	\nabla f\|_{H^s}\|g\|_{L^2}\nonumber
	\end{align}
	
Then \eqref{corcom1} is the consequence of this estimate and the identity:
	$$
	 [\nabla\mathcal{R},f]g=\mathcal{R}(g\nabla f)+[\mathcal{R},f]\nabla g.
	$$
	\end{proof}	

We will use also the following commutator and
product estimates whose proof are standard and thus omitted. One could refer to \cite{MR2768550} for example.
\begin{lemma}\label{classical com}
	Let $s\geq 1$, we have
		\begin{equation}\label{appendix:2.5}
	\begin{aligned}
	\|\Lambda^s(fg)\|_{L^2}\lesssim \|f\|_{W^{s,\infty}}\| g\|_{H^s},
	\end{aligned}
	\end{equation}
			\begin{equation}\label{appendix:2.75}
	\begin{aligned}
	\|\Lambda^s(fg)\|_{L^2}\lesssim \|f\|_{L^{\infty}}\| g\|_{H^s}+\|g\|_{L^{\infty}}\|f\|_{H^s},
	\end{aligned}
	\end{equation}
	\begin{equation}\label{appendix:3}	\begin{aligned}
 \|[\Lambda^s,f]g\|_{L^2}\lesssim \|g\|_{H^{s-1}}\| f\|_{W^{s,\infty}},
	\end{aligned}
	\end{equation}
\begin{equation}\label{appendix:4}
\begin{aligned}
	\|[\Lambda^s,f]g\|_{L^2}\leq \|\nabla f\|_{L^{\infty}}\|g\|_{H^{s-1}}+\|g\|_{L^{\infty}}\|f\|_{H^{s}}.
\end{aligned}
\end{equation}

\end{lemma}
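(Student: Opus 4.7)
The plan is to prove these four estimates simultaneously in the paradifferential framework already deployed in the proof of Lemma \ref{lemcom1}. Introduce Bony's decomposition
\begin{equation*}
fg = \dot{T}_f g + \dot{T}_g f + \dot{R}(f,g),
\end{equation*}
with $\dot{T}_f g=\sum_j \dot{S}_{j-1} f\, \dot{\Delta}_j g$ and $\dot{R}(f,g) = \sum_{|j-k|\le 1}\dot{\Delta}_j f \,\dot{\Delta}_k g$. Since $\Lambda^s$ and the spaces $H^s, W^{s,\infty}$ are inhomogeneous, first split every function into its low-frequency part (where Bernstein converts the $L^2$ norm into an $L^\infty$ norm at a fixed cost) and its high-frequency part (on which $\Lambda^s$ is comparable to $|D|^s$); it then suffices to obtain the analogous homogeneous estimates.

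For the tame product estimate \eqref{appendix:2.5}, after applying $\Lambda^s$ to each piece of the Bony decomposition place $\dot{S}_{j-1}f$ in $L^\infty$ and $\dot{\Delta}_j g$ (carrying the derivatives) in $L^2$ in the $\dot{T}_f g$ term, while in $\dot{T}_g f$ place $\dot{S}_{j-1}g$ in $L^2$ and $\dot{\Delta}_j f$ (carrying the derivatives) in $L^\infty$; the bound $\|\Lambda^s \dot{\Delta}_j f\|_{L^\infty} \lesssim \|f\|_{W^{s,\infty}}$ together with $\ell^2_j$-summation produces $\|f\|_{W^{s,\infty}}\|g\|_{H^s}$. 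The Kato--Ponce variant \eqref{appendix:2.75} follows by a symmetric split: in $\dot{T}_g f$ use Bernstein to trade $\|\dot{\Delta}_j f\|_{L^\infty}$ for $2^{3j/2}\|\dot{\Delta}_j f\|_{L^2}$, so that all derivatives on $f$ are controlled in $H^s$ at the cost of placing $g$ in $L^\infty$. The remainder $\dot{R}$ is always the easiest piece.

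For the commutator estimates \eqref{appendix:3}, \eqref{appendix:4}, I start from the identity
\begin{equation*}
[\Lambda^s,f]g = [\Lambda^s,\dot{T}_f]g + \Lambda^s \dot{T}_g f - \dot{T}_{\Lambda^s g}f + \Lambda^s \dot{R}(f,g) - \dot{R}(f,\Lambda^s g).
\end{equation*}
The last four terms carry no commutator structure and are controlled directly by the frequency-localized product estimates above; the key observation is that the $\Lambda^s$ on a paraproduct always lands on the high-frequency factor whose block index governs the whole paraproduct, so that after transferring the derivatives one never needs more than $s-1$ orders of $g$. For the leading term $[\Lambda^s,\dot{T}_f]g$, mimic exactly the kernel argument used in the proof of Lemma \ref{lemcom1}: write
\begin{equation*}
[\Lambda^s,\dot{T}_f]g = \sum_j \int \tilde{\chi}_j(y)\,\dot{\Delta}_j g(x-y)\bigl(\dot{S}_{j-1}f(x) - \dot{S}_{j-1}f(x-y)\bigr)\,\diff y,
\end{equation*}
Taylor-expand the difference to first order to pull out $y\cdot\nabla \dot{S}_{j-1}f$, and use $\|\,|\cdot|\tilde{\chi}_j\|_{L^1}\lesssim 2^{j(s-1)}$ together with $\ell^2_j$-summation to obtain a $\|\nabla f\|_{L^\infty}\|g\|_{H^{s-1}}$ contribution.

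The hard part is the bookkeeping required for \eqref{appendix:4}: one wants the factor $\|\nabla f\|_{L^\infty}$ rather than $\|f\|_{W^{s,\infty}}$ to appear on the first term, while $\|f\|_{H^s}$ should appear only multiplied by $\|g\|_{L^\infty}$. This forces, in each of the four non-commutator pieces above, a split according to which of the two frequency blocks in the paraproduct dominates, placing the dominant block of $f$ in $H^s$ (against $g$ in $L^\infty$) and the dominated block in $W^{1,\infty}$ (against $g$ in $H^{s-1}$). This is the standard Kato--Ponce interpolation trick and, once correctly set up, closes by Cauchy--Schwarz. The assumption $s\ge 1$ is used precisely to ensure that each application of $\nabla$ on $\dot{S}_{j-1}f$ gains a positive power of $2^j$ available for summation, and nonhomogeneous blocks $\Delta_{-1}f$ never cause difficulty because they already lie in $L^\infty\cap H^s$.
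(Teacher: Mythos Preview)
The paper does not actually prove this lemma: the authors state that the estimates are standard and refer the reader to the literature (e.g.\ Bahouri--Chemin--Danchin). Your sketch follows precisely the standard paradifferential route one finds in such references, and is correct in outline; in particular the kernel argument for $[\Lambda^s,\dot{T}_f]g$ is exactly the one used in Lemma~\ref{lemcom1}, and the remaining pieces are handled by the usual frequency bookkeeping.

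One small slip: for \eqref{appendix:2.75}, in the term $\dot{T}_g f$ no Bernstein trade from $L^\infty$ to $L^2$ is needed. Simply place $\dot{S}_{j-1}g$ in $L^\infty$ (bounded by $\|g\|_{L^\infty}$) and $\dot{\Delta}_j f$ in $L^2$; then $2^{js}\|\dot{\Delta}_j f\|_{L^2}$ is $\ell^2_j$-summable to $\|f\|_{H^s}$ directly. The Bernstein step you describe would go in the wrong direction and lose $3/2$ derivatives. This is only a cosmetic issue --- the correct bound drops out without it --- and otherwise your argument is essentially what the omitted proof would contain.
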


Given two functions $a(x,\xi),b(x,\xi)$, the Poisson brackets reads
\begin{equation*}
\begin{aligned}
 \{a,b\}=\partial_{\xi}a\cdot\partial_x b-\partial_{\xi}b \cdot\partial_x a.
 \end{aligned}
 \end{equation*}

\begin{lemma}\label{secondcomu}
Let $2\leq s\leq \frac{7}{2}$, we have
\begin{equation}\label{appendix:7}
\begin{aligned}
\|[\Lambda^s,f]g-\mathrm{i}^{-1}\{\langle \xi\rangle^s,f\}(D)g\|_{L^2}
\lesssim \|\na ^2 f\|_{L^{\infty}\cap H^{\frac{3}{2}}}\|g\|_{H^{s-2}},
%\|\nabla ^2 f\|_{L^{\infty}}\|g\|_{H^{s-2}}+\|g\|_{L^{\infty}}\|\nabla^2 f\|_{H^{s-2}}.
\end{aligned}
\end{equation}
and
\begin{equation}\label{appendix:8}
\begin{aligned}
\|[\Lambda^s,f]g-\mathrm{i}^{-1}\{\langle\xi\rangle^s,f\}(D)g\|_{L^2}\lesssim \| f\|_{W^{{s+\epsilon},\infty} }\|g\|_{H^{s-2}}.
\end{aligned}
\end{equation}
\end{lemma}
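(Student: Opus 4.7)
The plan is a direct Fourier/Littlewood--Paley analysis of the bilinear operator $E := [\Lambda^s,f]g - \mathrm{i}^{-1}\{\langle\xi\rangle^s, f\}(D)g$. A short Fourier-side computation shows that
\begin{equation*}
\widehat{E}(\xi) = \frac{1}{(2\pi)^d}\int R_2(\xi,\eta)\widehat f(\xi-\eta)\widehat g(\eta)\,\diff\eta,
\end{equation*}
where $R_2(\xi,\eta):=\langle\xi\rangle^s - \langle\eta\rangle^s - (\xi-\eta)\cdot s\eta\langle\eta\rangle^{s-2}$ is precisely the second-order Taylor remainder of $\langle\cdot\rangle^s$ around $\eta$: the linear term in the expansion is the Fourier representation of the Poisson bracket contribution, and cancels with $\mathrm{i}^{-1}\{\langle\xi\rangle^s,f\}(D)g$. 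Since $s\geq 2$, $R_2$ obeys the pointwise bound $|R_2(\xi,\eta)|\lesssim |\xi-\eta|^2\max(\langle\xi\rangle,\langle\eta\rangle)^{s-2}$.

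Next, decompose $f=\sum_k\Delta_kf$ and $g=\sum_l\Delta_lg$, so $E=\sum_{k,l}E_{k,l}$, and split into three regimes. In the low--high regime $k\leq l-3$, one has $|\xi-\eta|\sim 2^k\ll 2^l\sim\langle\eta\rangle$ on the support of the integrand, hence $|R_2|\lesssim 2^{2k}\cdot 2^{l(s-2)}$; the output is frequency-localized at scale $2^l$. Summing first $\sum_{k\leq l-3}\Delta_kf = S_{l-3}f$ (so that the Laplacian bound telescopes to $\|\nabla^2 S_{l-3}f\|_{L^\infty}\lesssim\|\nabla^2 f\|_{L^\infty}$ uniformly in $l$) and then exploiting almost-orthogonality in $l$ via Plancherel gives
\begin{equation*}
\Bigl\|\sum_{k\leq l-3}E_{k,l}\Bigr\|_{L^2}\lesssim \|\nabla^2 f\|_{L^\infty}\,\|g\|_{H^{s-2}}.
\end{equation*}

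In the remaining high--low / comparable regimes $k\geq l-2$, the Taylor gain is absent and one only has $|R_2|\lesssim 2^{ks}$. Young's inequality in Fourier combined with Bernstein yields
\begin{equation*}
\|E_{k,l}\|_{L^2}\lesssim 2^{k(s-2)}\|\Delta_k\nabla^2 f\|_{L^2}\cdot 2^{l d/2}\|\Delta_l g\|_{L^2}.
\end{equation*}
A Cauchy--Schwarz in $l$, writing $2^{ld/2}=2^{l(d/2-(s-2))}\cdot 2^{l(s-2)}$, then converges in $l$ up to $k+2$ precisely when $d/2>s-2$, i.e.\ $s<d/2+2=7/2$ in $d=3$, producing $\sum_{l\leq k+2}\cdots\lesssim 2^{k\,d/2}\|g\|_{H^{s-2}}$. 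Summing the remaining $k$-series gives the bound $\lesssim\|\nabla^2 f\|_{H^{d/2}}\|g\|_{H^{s-2}}=\|\nabla^2 f\|_{H^{3/2}}\|g\|_{H^{s-2}}$, which combined with the low--high step proves \eqref{appendix:7}. For \eqref{appendix:8}, in the same regimes one instead uses $\|\Delta_k f\|_{L^\infty}\lesssim 2^{-k(s+\epsilon)}\|f\|_{W^{s+\epsilon,\infty}}$ (via the embedding $W^{s+\epsilon,\infty}\hookrightarrow B^{s+\epsilon}_{\infty,\infty}$), which closes the sum geometrically in $k$ at the cost of the $\epsilon$-derivative loss on $f$.

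The main obstacle is the high--low / comparable regime: the absence of a Taylor gain forces trading regularity via Cauchy--Schwarz at the dimension-critical exponent $d/2=3/2$, which is exactly the constraint pinning down the upper bound $s\leq 7/2$ in dimension three, and is also why the $H^{3/2}$-control on $\nabla^2 f$ cannot be removed from \eqref{appendix:7}.
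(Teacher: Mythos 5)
Your overall strategy coincides with the paper's: split the bilinear error along Bony's decomposition, exploit the second--order Taylor cancellation of $\langle\xi\rangle^s$ in the low--high regime, and estimate the remaining high--low/diagonal piece directly. For the low--high part, be aware that the pointwise symbol bound $|R_2|\lesssim 2^{2k}2^{l(s-2)}$ alone does not yield an $L^\infty\times L^2\to L^2$ bound for a bilinear multiplier; the rigorous version of your ``telescoping'' step is the paper's physical-space argument, expanding $S_{l-3}f(x-y)$ to second order against the kernel $\tilde\chi_l$, so that step is not optional. For \eqref{appendix:8} your high--low estimate reproduces the paper's treatment of $G_2$, namely $\sum_j 2^{js}\|\Delta_j f\|_{L^\infty}\|S_j g\|_{L^2}\lesssim\|f\|_{B^{s}_{\infty,1}}\|g\|_{L^2}$, and is fine.

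The genuine gap is in your high--low summation for \eqref{appendix:7} (which the paper itself does not prove, deferring to \cite{MR2354691}). After your Cauchy--Schwarz in $l$ you are left with $\sum_k 2^{kd/2}\|\Delta_k\nabla^2 f\|_{L^2}$, which is the Besov norm $\|\nabla^2 f\|_{B^{3/2}_{2,1}}$ and is \emph{not} controlled by $\|\nabla^2 f\|_{H^{3/2}}$: an $\ell^1_k$ sum of critically weighted blocks cannot be closed by an $\ell^2_k$ norm. The standard repair is to note that for $l\leq k-3$ the output $E_{k,l}$ is frequency-localized at $|\xi|\sim 2^k$, so these contributions are almost orthogonal and one may take $\ell^2_k$ of the inner sums, while the diagonal $|k-l|\leq 2$ closes by Cauchy--Schwarz in the pair $(k,l)$; both steps still use $s<d/2+2$. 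Two further points need attention: at the endpoint $s=7/2$ the exponent $d/2-(s-2)$ vanishes and the inner $l$-sum is only borderline, so the stated range $s\leq 7/2$ requires an extra argument (e.g.\ invoking the $L^\infty$ component of the norm); and for the finitely many blocks with $k\leq 0$ the reverse Bernstein identification $2^{ks}\|\Delta_k f\|_{L^2}\sim 2^{k(s-2)}\|\Delta_k\nabla^2 f\|_{L^2}$ fails, so those blocks must also be routed through the Taylor cancellation, since only $\nabla^2 f$ appears on the right-hand side of \eqref{appendix:7}.
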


%\begin{remark}
	%Note that \eqref{first} and \eqref{second} are mainly used to control the nonlinear term :
	%$[\Lambda^k,n]\mathrm{div}\partial_j v-k\nabla n\cdot\Lambda^k\partial_j\mathcal{P}^{\perp} v$ and %linear perturbed term
	%$[\Lambda^k,\varrho]\mathrm{div}\partial_j %v-k\nabla\varrho\cdot\Lambda^k\partial_j\mathcal{P}^{\perp} v$ appearing in \eqref{second %comuutator} respectively.
%\end{remark}

\begin{proof}
	One can refer to  \cite[Lemma A.3]{MR2354691} for the proof of \eqref{appendix:7}. We only sketch the proof of \eqref{appendix:8}. %which is not included in \cite{MR2354691}.
	
	We denote by $\partial_k,\partial^k$ the space derivative and frequency derivative respectively. We use decomposition:
\begin{eqnarray*}
[\Lambda^s,f]g-\mathrm{i}^{-1}\{\langle\xi\rangle^s,f\}(D)g&&=\underbrace{[\Lambda^s,{T}_f]g+\mathrm{i}
		T_{\partial_k f}(\partial^{k}\Lambda^{s})(D) g}_{G_1}\\
	&&\quad+\underbrace{\Lambda^s(\tilde{{T}}_{g}f)-\tilde{{T}}_{\Lambda^{s} g}( f)+s\tilde{{T}}_{\Lambda^{s-2} \partial_k g}(\partial_k f)}_{G_2}
\end{eqnarray*}
	
	Taking $\tilde{\phi}$ (defined in the proof of Lemma \ref{lemcom1}), noticing that $\tilde{\phi}_j \equiv 1$ on the support of $\phi_j$,
	we may decompose $G_1$ as
	\begin{equation*}
	\begin{aligned}
	G_1&=
	\sum_{j\in \mathbb{Z}}\bigg(\Lambda^s \tilde{\phi}(2^{-j}D)\big(S_{j-1}f{\Delta}_{j} g\big)\\
	&\quad-S_{j-1}f \big(\tilde{\phi}(2^{-j}D)\Lambda^s{\Delta}_{j} g\big)+\mathrm{i}S_{j-1}\partial_k f \big(\partial^{k}(\tilde{\phi}(2^{-j}\cdot)\langle \cdot\rangle ^s)(D){\Delta}_{j} g\big)\bigg)\\
	&=\colon\sum_{j\in \mathbb{Z}}A^j.
	\end{aligned}
	\end{equation*}
	where we denote $\langle\cdot\rangle=(1+|\cdot|^2)^{\f{1}{2}}.$
	For $j=0$, it is easy to see that:
	\begin{equation*}
	   \|A_0\|_{L^2}\lesssim \|f\|_{L^{\infty}}\|\Delta_0 g\|_{L^2}.
	\end{equation*}
	
For $j\geq 1$,
	denote $\tilde{\chi}_j=\mathcal{F}^{-1}(\tilde{\phi}(2^{-j}\cdot)\langle\cdot\rangle^s)$. By Taylor expansion, one has
\begin{equation*}
\begin{aligned}
	A^j(x)&=\int \tilde{\chi}_{j}( y){\Delta}_j g(x-y)\big(S_{j-1}f(x-y)-S_{j-1}f(x)+\partial_k S_{j-1}f(x)y_k\big)\,\diff y\\
	&=\int \tilde{\chi}_{j}( y){\Delta}_jg(x-y)
	\int_{0}^{1}y^{T}\cdot D^2 S_{j-1}f(x-t y)\cdot y (1-t)\,\diff t \diff y,
\end{aligned}
\end{equation*}
	which yields, for $j\geq 1$
	\begin{equation*}
	\begin{aligned}
	\|A^j\|_{L^2}&\lesssim \||\cdot|^2\tilde{\chi}_j\|_{L^1}\|{\Delta}_j g\|_{L^2}\|D^2 {S}_{j-1}f\|_{L^{\infty}}\\
	&\lesssim 2^{j(s-2)} \|{\Delta}_j g\|_{L^2}\|D^2 f\|_{L^{\infty}}.
	\end{aligned}
	\end{equation*}
Taking $l_{j}^2$ norm of of sequence $(A^j)$, one obtains
	\begin{equation}\label{appendix:9}
	\begin{aligned}
	\|G_1\|_{L^2}\lesssim\|g\|_{H^{s-2}}\|f\|_{W^{s+\epsilon,\infty}}.
	\end{aligned}
	\end{equation}	
As for $G_2$, it can be estimated easily
\begin{equation}\label{appendix:10}
\begin{aligned}
	\|G_2\|_{L^2}&\lesssim \sum_{j\geq -1}2^{js}\|S_{j}g\|_{L^2}\|\Delta_j f\|_{L^{\infty}}\\
	&\lesssim \|g\|_{L^2}\|f\|_{B_{\infty,1}^{s}}\lesssim\|g\|_{L^2}\|f\|_{W^{s+\epsilon,\infty}}.
\end{aligned}
\end{equation}
	
The desired result \eqref{appendix:8} follows from \eqref{appendix:9} and \eqref{appendix:10}.

\end{proof}

We recall the composition estimate whose proof could be found in \cite{MR2304160} or \cite{MR2768550}.
\begin{lemma}\label{composition estimate}
Let $h:\mathbb{R}\rightarrow \mathbb{R}$ a smooth function with $h(0)=0.$ Suppose $u\in H^s(\mathbb{R}^3)\cap L^{\infty}(\mathbb{R}^3)(s>0),$ then
$h(u)\in H^s(\mathbb{R}^3),$
and the following holds:

\begin{equation}\label{appendix:19}
    \begin{aligned}
      \|h(u)\|_{H^s(\mathbb{R}^3)}\leq
      C(s,|h|_{C^{[s]+1}},\|u\|_{L^{\infty}})\|u\|_{H^s(\mathbb{R}^3)}.
    \end{aligned}
\end{equation}
\end{lemma}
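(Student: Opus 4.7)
The plan is to prove the Moser-type composition estimate via a Littlewood--Paley telescoping argument, which is the standard route when $s$ is not an integer. The point is to exploit $h(0)=0$ to write $h(u)$ as a sum over dyadic scales and then reduce matters to a product estimate involving a smooth, uniformly bounded symbol at each frequency.

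First, I would use the telescoping identity
\begin{equation*}
h(u)=\sum_{j\geq -1}\bigl[h(S_{j+1}u)-h(S_{j}u)\bigr],
\end{equation*}
with the convention $S_{-1}u=0$, so that $h(S_{-1}u)=h(0)=0$ and the series converges in $\mathcal{S}'$ since $S_{j+1}u\to u$ almost everywhere and in $L^{2}$. By the fundamental theorem of calculus,
\begin{equation*}
h(S_{j+1}u)-h(S_{j}u)=M_{j}\,\Delta_{j}u,\qquad M_{j}:=\int_{0}^{1}h'\bigl(S_{j}u+\tau\Delta_{j}u\bigr)\,\diff\tau.
\end{equation*}
Since $\|S_{j}u\|_{L^{\infty}}+\|\Delta_{j}u\|_{L^{\infty}}\lesssim \|u\|_{L^{\infty}}$ uniformly in $j$, the smoothness of $h$ yields $\|M_{j}\|_{L^{\infty}}\leq C(\|u\|_{L^{\infty}},|h|_{C^{1}})$. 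More importantly, $k$-fold space derivatives of $M_{j}$ fall on $S_{j}u$ and $\Delta_{j}u$ and obey the Bernstein-type bound $\|\nabla^{k}M_{j}\|_{L^{\infty}}\lesssim C(\|u\|_{L^{\infty}},|h|_{C^{k+1}})\,2^{jk}\|u\|_{L^{\infty}}$.

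Next I would estimate $\|\Delta_{k}h(u)\|_{L^{2}}$ by splitting the sum into the \emph{high--low} regime $j\geq k-N_{0}$ and the \emph{low--high} regime $j<k-N_{0}$. In the high-frequency regime, I use the trivial bound
\begin{equation*}
\|\Delta_{k}(M_{j}\Delta_{j}u)\|_{L^{2}}\lesssim \|M_{j}\|_{L^{\infty}}\|\Delta_{j}u\|_{L^{2}},
\end{equation*}
multiply by $2^{ks}$, and apply Young's inequality in $\ell^{2}$ after writing $2^{ks}=2^{(k-j)s}2^{js}$; the convergent factor $2^{(k-j)s}$ for $j\geq k$ together with a matching gain for $k\leq j$ comes from the Bernstein derivatives gained on $M_{j}$. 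In the low-frequency regime one integrates by parts against the kernel of $\Delta_{k}$: writing $\Delta_{k}(M_{j}\Delta_{j}u)$ as a convolution with a band-limited kernel, one trades $2^{-kN}$ for $N$ derivatives falling on $M_{j}\Delta_{j}u$, each costing at most $2^{jN}$ thanks to the Bernstein-type bounds above. Choosing $N>s$ makes the resulting series in $j$ summable and produces a factor $2^{-(k-j)(N-s)}$ that yields an $\ell^{2}$-summable convolution in the variable $k-j$.

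The main obstacle, and the only delicate step, is bookkeeping these derivative losses so that after taking the $\ell^{2}(k)$ norm of $2^{ks}\|\Delta_{k}h(u)\|_{L^{2}}$ one recovers the single factor $\|u\|_{H^{s}}$ rather than $\|u\|_{H^{s}}^{2}$ or worse. The way to ensure this is to keep exactly one ``rough'' factor $\Delta_{j}u$ and to place all other $H^{s}$-weighted $u$-dependence inside the uniformly bounded symbol $M_{j}$; since $M_{j}$ is a smooth function of $u$ controlled only through $\|u\|_{L^{\infty}}$ and pointwise derivatives of $h$ up to order $[s]+1$, it contributes only to the constant $C(s,|h|_{C^{[s]+1}},\|u\|_{L^{\infty}})$. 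Combining the two regimes and using $\ell^{2}(\mathbb{Z})$ convolution gives
\begin{equation*}
\Bigl(\sum_{k\geq -1}2^{2ks}\|\Delta_{k}h(u)\|_{L^{2}}^{2}\Bigr)^{1/2}\leq C(s,|h|_{C^{[s]+1}},\|u\|_{L^{\infty}})\,\|u\|_{H^{s}},
\end{equation*}
which is exactly \eqref{appendix:19}. I would finally note that for integer $s$ one may give a shorter direct argument via Fa\`a di Bruno and the tame estimate \eqref{appendix:2.75}, but the Littlewood--Paley route above covers all $s>0$ uniformly.
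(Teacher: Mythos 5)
The paper does not actually prove this lemma; it only cites \cite{MR2304160} and \cite{MR2768550}, and your argument is precisely the Meyer first-linearization proof given in those references (telescoping $h(u)=\sum_j\bigl[h(S_{j+1}u)-h(S_ju)\bigr]=\sum_j M_j\Delta_j u$ with $M_j=\int_0^1h'(S_ju+\tau\Delta_ju)\,\diff\tau$, Bernstein bounds on $\nabla^kM_j$, and the high--low/low--high dyadic bookkeeping), so it is correct and matches the intended proof. The only cosmetic point is that your low-frequency regime uses derivatives of $M_j$ up to an order $N>s$, so the constant as you obtain it depends on a few more derivatives of $h$ than the stated $|h|_{C^{[s]+1}}$ --- harmless here since $h$ is smooth.
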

 If in addition, $h'(0)=0,$ then %
 \begin{equation}\label{appendix:20}
     \begin{aligned}
     \|h(u)\|_{H^s(\mathbb{R}^3)}\leq
      C(s,|h|_{C^{[s]+1}},\|u\|_{L^{\infty}})\|u\|_{L^{\infty}(\mathbb{R}^3)}\|u\|_{H^s(\mathbb{R}^3)}.
     \end{aligned}
 \end{equation}
and  for any $u,v\in H^s(\mathbb{R}^3)\cap L^{\infty}(\mathbb{R}^3)(s>0),$ one has that:
\begin{equation}\label{appendix:21}
     \begin{aligned}
     \|h(u)-h(v)\|_{H^s(\mathbb{R}^3)}\leq C(s,|h|_{C^{[s]+1}},\|(u,v)\|_{L^{\infty}})\|u-v\|_{L^{\infty}\cap H^s} \|(u,v)\|_{L^{\infty}\cap H^s}.
     \end{aligned}
    \end{equation}
\begin{corollary}\label{esofF}
Recall $F$ is defined in \eqref{defofF},
we have for $s>5/2,$
\begin{equation}
    \|F\|_{H^s}\leq C(\|(\vr,n)\|_{H^{s+1}}) (\|n\|_{W^{1,\infty}}+\|\vr\|_{W^{s+3,\infty}}) (\|(n,\nabla n)\|_{H^s}.
\end{equation}
\begin{proof}
 We will control the term $F$ by product \eqref{appendix:2.5}-\eqref{appendix:2.75} and composition estimates \eqref{appendix:19}-\eqref{appendix:21}.
 For instance, by product estimate \eqref{appendix:2.75} and composition estimates \eqref{appendix:20},
 \begin{equation*}
\begin{aligned}
\|(g(\rho)-g(\bar{\rho}))\nabla n\|_{H^s}& \leq g(\bar{\rho})\|(\vr+n)\nabla n\|_{H^s}+\|h(\vr+n)\nabla n\|_{H^s}\\
&\lesssim \|\na n\|_{H^s}\|n\|_{L^{\infty}}+\|\na n\|_{L^{\infty}}\|n \|_{H^s}+\|\vr\|_{W^{s,\infty}}\|\nabla n\|_{H^s}\\
&\qquad  +\|h(\vr+n)\|_{L^{\infty}}\|\na n\|_{H^s}+\|h(\vr+n)\|_{H^{s}}\|\nabla n\|_{L^{\infty}}\\
&\lesssim (\|(n,\na n)\|_{L^{\infty}}+\|\vr\|_{W^{s,\infty}})\|(n,\nabla n)\|_{H^s}.
\end{aligned}
\end{equation*}
 where $h(y)=g(\bar{\rho}+y)-g(\bar{\rho})-g'(\bar{\rho})y$ satisfies $g(0)=g'(0)=0.$

 For the term
 $$\nabla \big((K(\rho)-K(\tilde{\rho}))\Delta\vr\big)=\big(K(\rho)-K(\tilde{\rho})\big)\nabla\Delta\vr +\nabla \big(K(\rho)-K(\tilde{\rho})\big)\Delta\tilde{\vr}=\colon (1)+(2),$$
 We only estimate $(1)$ as $(2)$ is similar.
 Denote $h_1(x)=K(\bar{\rho}+x)-K(\bar{\rho})-K'(\bar{\rho})x,$ we have that:
 \begin{equation}
 (1)=\big(K'(\bar{\rho})n+h_1(\varrho+n)-h_1(\varrho)\big)\nabla\Delta\varrho
 \end{equation}
 We thus have by \eqref{appendix:2.5} and \eqref{appendix:21}:
 \begin{equation}
 \begin{aligned}
   \|(1)\|_{H^s}&\lesssim (\|n\|_{H^s}+ \|h_1(\varrho+n)-h_1(\varrho)\|_{H^s})\|\vr\|_{W^{s+3,\infty}}\\
   &\lesssim (1+\|(\vr,n)\|_{H^s\cap L^{\infty}})
   \|n\|_{H^s\cap L^{\infty}}\|\vr\|_{W^{s+3,\infty}}\lesssim \|n\|_{H^s}\|\vr\|_{W^{s+3,\infty}}.
  \end{aligned}
 \end{equation}
 The other terms in the expression of $F$ can be controlled in the same manner, we omit the proof.
 \end{proof}
\end{corollary}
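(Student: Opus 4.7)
The plan is to bound each of the four summands of $F$ separately, exploiting the fact that every difference $g(\rho)-g(\bar\rho)$, $g(\rho)-g(\tilde\rho)$, $K(\rho)-K(\tilde\rho)$, or $K'(\rho)-K'(\tilde\rho)$ vanishes either linearly in $\varrho+n$ or linearly in $n=\rho-\tilde\rho$. The composition estimates \eqref{appendix:19}--\eqref{appendix:21} then convert each such difference into a tame product whose $H^s$ norm is carried by $n$ (or $\varrho+n$), while the tame product estimate \eqref{appendix:2.75} lets me place the $L^\infty$-type norm on the factor containing $\tilde\rho=\bar\rho+\varrho$. This bookkeeping matches exactly the structure of the claim, because $\varrho$ enjoys fast time decay in $W^{s+3,\infty}$ via Theorem \ref{thmek1}, while $n$ is the unknown we control only in Sobolev norm.

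For the first summand $(g(\rho)-g(\bar\rho))\nabla n$ I would decompose $g(\rho)-g(\bar\rho)=g'(\bar\rho)(\varrho+n)+h(\varrho+n)$ with $h(y):=g(\bar\rho+y)-g(\bar\rho)-g'(\bar\rho)y$ satisfying $h(0)=h'(0)=0$; applying \eqref{appendix:20} to $h$ together with \eqref{appendix:2.75} gives a bound of the correct shape, with $\|n\|_{L^\infty}$ and $\|\varrho\|_{W^{s,\infty}}$ on $\nabla n\in H^s$, and $\|\nabla n\|_{L^\infty}$ on $n\in H^s$. For the second summand $(g(\rho)-g(\tilde\rho))\nabla\tilde\rho$, since $\rho-\tilde\rho=n$, the Lipschitz composition estimate \eqref{appendix:21} yields $\|g(\rho)-g(\tilde\rho)\|_{H^s}\lesssim C(\|(\varrho,n)\|_{H^s\cap L^\infty})\,\|n\|_{H^s\cap L^\infty}$, and I then pair with $\nabla\tilde\rho=\nabla\varrho$ estimated in $W^{s+2,\infty}$ via \eqref{appendix:2.75}, which is absorbed in $\|\varrho\|_{W^{s+3,\infty}}$.

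For the gradient summands I first distribute the outer $\nabla$, writing for instance
\[
\nabla\bigl[(K(\rho)-K(\tilde\rho))\Delta\tilde\rho\bigr]=(K(\rho)-K(\tilde\rho))\,\nabla\Delta\tilde\rho+\bigl(K'(\rho)\nabla\rho-K'(\tilde\rho)\nabla\tilde\rho\bigr)\,\Delta\tilde\rho.
\]
In the first piece I decompose $K(\rho)-K(\tilde\rho)=K'(\tilde\rho)n+\bigl(h_1(\varrho+n)-h_1(\varrho)\bigr)$ with $h_1(y)=K(\bar\rho+y)-K(\bar\rho)-K'(\bar\rho)y$, put the Sobolev regularity on $n$ (using \eqref{appendix:21} to control the difference of $h_1$'s) and the decaying $\|\varrho\|_{W^{s+3,\infty}}$ norm on $\nabla\Delta\tilde\rho$. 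For the second piece I split $K'(\rho)\nabla\rho-K'(\tilde\rho)\nabla\tilde\rho=K'(\rho)\nabla n+(K'(\rho)-K'(\tilde\rho))\nabla\tilde\rho$ and estimate each factor analogously. The fourth summand, coming from $\nabla[(K'(\rho)-K'(\tilde\rho))\nabla|\tilde\rho|^2]$, is handled the same way, now with $|\tilde\rho|^2$ contributing a further $\|\varrho\|_{W^{s+2,\infty}}$ factor after differentiation.

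The principal obstacle is the purely combinatorial bookkeeping: at every step one must place the $H^s$ (or $H^{s+1}$) norm onto the factor that genuinely depends on the unknown $n$, and pair the $\tilde\rho$-dependent factor with an $L^\infty$-type norm, since our only integrable-in-time input on $\varrho$ through Theorem \ref{thmek1} is in $W^{s+3,\infty}$, not in any Sobolev norm. Once this discipline is respected throughout, summing the four estimates and absorbing all composition constants into the factor $C(\|(\varrho,n)\|_{H^{s+1}})$ produces exactly the claimed inequality.
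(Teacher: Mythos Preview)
Your approach is essentially identical to the paper's: you treat each summand of $F$ by subtracting the linear Taylor part of the composition (introducing the auxiliary functions $h$ and $h_1$), then apply the product estimates \eqref{appendix:2.5}--\eqref{appendix:2.75} together with the composition estimates \eqref{appendix:19}--\eqref{appendix:21}, always placing the $W^{s+3,\infty}$ norm on the $\varrho$-factor and the $H^s$ norm on the $n$-factor. One small slip: with your choice $h_1(y)=K(\bar\rho+y)-K(\bar\rho)-K'(\bar\rho)y$ the correct identity is $K(\rho)-K(\tilde\rho)=K'(\bar\rho)\,n+\bigl(h_1(\varrho+n)-h_1(\varrho)\bigr)$, not $K'(\tilde\rho)\,n+\cdots$, but this is inconsequential for the estimate.
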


\section*{Acknowledgement}{The author would like to thank his supervisor Professor Fr\' ed\' eric Rousset
for his kind guidance and encouragements.
He thanks Professor Corentin Audiard for the fruitful discussions in the conference 'Inhomogeneous Flows' held in CIRM.
He would also send his appreciation to Yuexun Wang for his careful checking of the details and for his useful suggestions which improve the presentation greatly.}
%\begin{thebibliography}{000}
%\bibitem{GIP}
%{\sc Y. Guo, A. Ionescu, and B. Pausader}, \textsc{\it Global solutions of the {E}uler-{M}axwell two-fluid system in 3{D}}, 	Ann. of Math., {\bf 183} (2016) 377-498.
%\bibitem{Audiard}
%{\sc C. Audiard}, \textsc{\it On the time of existence of solutions of the euler-korteweg system}, 	arXiv:1906.01385 (2019).
%\bibitem{AH}
%{\sc C. Audiard and B. Haspot}, \textsc{\it Global well-posedness of the Euler-Korteweg system for small irrotational data}, Comm. Math. Phys., {\bf 351} (2017) 201-247.
%\bibitem{BDD}
%{\sc S. Benzoni-Gavage, R. Danchin, and S. Descombes}, \textsc{\it On the well-posedness for the Euler-Korteweg model in several space dimensions}, Indiana Univ. Math. J., {\bf 56} (2007) 1499-1579.	
%\bibitem{IL}
%{\sc A. Ionescu and V. Lie}, \textsc{\it Long term regularity of the one-fluid Euler-Maxwell system in 3D with vorticity}, Adv. Math., {\bf 325} (2018) 719-769.	
%\bibitem{Sideris}
%{\sc T. Sideris}, \textsc{\it Formation of singularities in three-dimensional compressible fluids}, Comm. Math. Phys., {\bf 101} (1985) 475-485.
%\end{thebibliography}	

\bibliographystyle{abbrv}
\nocite{*}
\bibliography{ref}

\end{document}